\newcommand{\R}{\mathbb R}
\newcommand{\C}{\mathbb C}
\renewcommand{\Re}{\mathop{\text{\upshape{Re}}}}
\newcommand{\sm}{\setminus}
\renewcommand{\AA}{\mathcal{A}}
\newcommand{\BB}{\mathcal{B}}
\newcommand{\HH}{\mathcal{H}}
\newcommand{\TT}{\mathcal{T}}
\renewcommand{\epsilon}{\varepsilon}
\renewcommand{\bar}[1]{\overline{#1}}
\renewcommand{\tilde}{\widetilde}
\renewcommand{\phi}{\varphi}
\renewcommand{\div}{\mathop{\text{\upshape{div}}}}
\newtheorem{theorem}{Theorem}[section]
\newtheorem{lemma}[theorem]{Lemma}
\newtheorem{proposition}[theorem]{Proposition}
\theoremstyle{definition}
\newtheorem{remark}[theorem]{Remark}
\numberwithin{equation}{section}
\begin{document}

\title[Exponential stability for a coupled system]
{Exponential stability for a coupled system of damped-undamped plate equations}
\author{Robert Denk}
\address{Universit\"at Konstanz, Fachbereich f\"ur Mathematik und Statistik,
         78457 Konstanz, Germany}
\email{robert.denk@uni-konstanz.de}
\author{Felix Kammerlander}
\address{Universit\"at Konstanz, Fachbereich f\"ur Mathematik und Statistik,
         78457 Konstanz, Germany}
         \email{felix.kammerlander@uni-konstanz.de}
\thanks{}
\date{February 2, 2017}
\begin{abstract}
We consider the transmission problem for a coupled system of undamped and structurally damped plate equations in two sufficiently smooth and bounded subdomains. It is shown that, independently of the size of the damped part, the damping is strong enough to produce uniform exponential decay of the  energy of the coupled system.
\end{abstract}

\subjclass[2010]{74K20; 74H40; 35B40; 35Q74}

\keywords{Plate equation, transmission problem, exponential stability}

\maketitle

\section{Introduction}

In this paper, we investigate a coupled system of linear plate equations where an undamped plate and a structurally damped plate are coupled through transmission conditions. From the point of view of applications, there is a connection to the suppression of vibration of elastic structures which is a main topic in material science. The undamped plate equation can be seen as a linear model for vibrating stiff objects where the potential energy is related to curvature-like terms, resulting in the bi-Laplacian operator as the main elastic operator, see, e.g., \cite{Leis86}, Chapter~12. For the purely undamped plate, we have no energy dissipation, and the governing semigroup is unitary. The model of structural damping is widely used to describe smoothing effects and loss of energy (cf. \cite{Russell84} for a discussion of the model). Here, we consider the damping term which has order two in the spatial variables, so it is of half order of the leading elastic term, see also \cite{Chen-Triggiani89} and \cite{DS15}  for the analysis of the structurally damped plate equation.

From a theoretical point of view, the resulting system can be seen as a transmission problem of mixed type: While the structurally damped plate equation is of parabolic nature, the undamped part is of dissipative nature. Below we will see that the damping is strong enough (independent of the size of the damped part) to obtain exponential stability for the semigroup of the coupled system. The analog result for a coupled system of thermoelastic / elastic plates was obtained in \cite{munoz_rivera-portillo_oquendo04}. The question of analyticity of the semigroup for a coupled thermoelastic plate / plate system is discussed in \cite{fernandez_sare-munoz_rivera11}. In \cite{Man13}, a plate / plate transmission problem with damping only on a part of the boundary with resulting polynomial decay was studied, see also \cite{Ammari-Vodev09} for the proof of exponential stability for a boundary stabilized plate / plate transmission problem. Transmission problems of plate / plate type can also be seen as an equation with  coefficients having jumps, cf. \cite{liu-williams00}.

In the system we consider the damping effect acting only through the transmission interface. Closely related is the question of boundary damping, see, e.g., \cite{Mustafa-Abusharkh15} or \cite{vila_bravo-munoz_rivera09}. In the literature, there are many results on coupled systems of plate / wave type (cf. \cite{ammari-nicaise10} and the references therein). In particular, in \cite{Avalos96} and \cite{Avalos-Lasiecka98}, the exponential stability for an abstract wave equation coupled with a plate-like equation on the boundary is studied. To our knowledge, the undamped / structurally damped plate system has not yet  been studied in  literature.

Let $\Omega\subset\R^n$ be a bounded domain with boundary $\Gamma_1 :=\partial \Omega$, and let $\Omega_2\subset\Omega$ be a non-empty bounded domain satisfying $\bar{\Omega_2}\subset\Omega$. We set $\Gamma:=\partial \Omega_2$ and $\Omega_1 := \Omega\setminus \bar{\Omega_2}$. Then, $\Gamma$ is the common interface (transmission interface) between $\Omega_1$ and $\Omega_2$, and $\partial \Omega_1 = \partial \Omega\cup\Gamma$ (see Figure~\ref{fig1} for the geometrical situation). All domains are assumed to be of class $C^4$. For technical reasons, we assume $n\le 4$, including the physically most relevant cases $n=1$ and $n=2$. Let $\nu$ denote the outer unit normal on $\Gamma_1$. On $\Gamma$, we choose $\nu$ to be the outer unit normal with respect to $\Omega_2$. Thus, $\nu$ is the inner unit normal vector on $\Gamma$ with respect to $\Omega_1$, see Figure~\ref{fig1}. Note that, apart from the smoothness, we do not impose a geometrical condition on the domains.

\begin{figure}[ht]
\begin{center}
\begin {tikzpicture}[x=3em,y=3em]
\draw (0,0) ellipse (1.05  and 0.7);
\draw (0.5, 0.2 ) ellipse (2.12 and 1.58);
\draw [->,bend left=15]  (-2.2, 0.5) to (-1.6,0.35);
\draw [->,bend left=15] (1.3, -0.5) to (0.95,-0.3);
\draw [->, thick] (0.77,0.47) to node{$\qquad\nu$} +(0.5,0.6);
\draw [->, thick] (2.58,0.5) to node[below]{$\nu$} +(0.8,0.2);
\node at (0,0) {$\Omega_2$};
\node at (-0.3, 1) {$\Omega_1$};
\node at (-2.5,0.5) {$\Gamma_1$};
\node at (1.5,-0.5) {$\Gamma$};
\end{tikzpicture}
\end{center}
\caption{The set $\Omega=\Omega_1\cup\Gamma\cup\Omega_2$. \label{fig1}}
\end{figure}
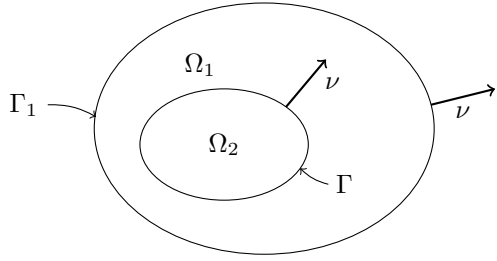

We consider a transmission problem for thin plates where the plate in $\Omega_2$ is undamped and the material in $\Omega_1$ is structurally damped. More precisely, we are
looking for solutions $u_i\colon \Omega_i \to \C$ of the system \index{Transmission problem!hyperbolic-parabolic!partially structurally damped}
\begin{align}
	\partial_t^2 u_1 + \Delta^2 u_1 - \rho \Delta \partial_t u_1	& = 0 \quad		 \text{ in } (0, \infty) \times \Omega_1,																																\label{2_eq_tm_damped} \\
	\partial_t^2 u_2 + \Delta^2 u_2									& = 0 \quad		 \text{ in } (0, \infty) \times \Omega_2	\label{2_eq_tm_undamped}
\end{align}
with {\it clamped boundary conditions}
\begin{align}
	u_1 = \partial_\nu u_1 = 0 \quad \text{ on } \Gamma_1																		\label{2_eq_tm_clamped}
\end{align}
Here, $\rho \in (0,\infty)$ is the damping factor.
The transmission conditions on $\Gamma$ are given by
\begin{align}
	u_1 														& = u_2, 														\label{2_eq_tm_0}\\
	\partial_\nu u_1											& = \partial_\nu u_2, 											\label{2_eq_tm_1}\\
	\Delta u_1													& = \Delta u_2, 												\label{2_eq_tm_2}\\
	-\rho \partial_\nu \partial_t u_1 + \partial_\nu \Delta u_1	& = \partial_\nu \Delta u_2.										\label{2_eq_tm_3}
\end{align}
The problem is completed by the initial conditions
\begin{align}
	u_1(0, \cdot) = u_1^0, & \quad \partial_t u_1(0, \cdot) = u_1^1 \quad \text{ in } \Omega_1,					\label{2_eq_tm_init_1}\\
	u_2(0, \cdot) = u_2^0, & \quad \partial_t u_2(0, \cdot) = u_2^1 \quad \text{ in } \Omega_2.					\label{2_eq_tm_init_2}
\end{align}
The \index{Energy!partially structurally damped transmission problem} energy of the system \eqref{2_eq_tm_damped}-\eqref{2_eq_tm_init_2} is defined as
\begin{equation}\label{2_eq_tm_energy}
	E(t) := \frac12 \int_{\Omega_1} \vert \partial_t u_1 \vert^2 + \vert \Delta u_1 \vert^2 \,dx
			+ \frac12 \int_{\Omega_2} \vert \partial_t u_2 \vert^2 + \vert \Delta u_2 \vert^2 \,dx.
\end{equation}
If $(u_1, u_2)$ is a solution, integration by parts yields the estimate
\begin{align}
	\frac{d}{dt} E(t)	& = -\rho \Vert \nabla \partial_t u_1 \Vert_{L^2(\Omega_1)}^2 \, \leq 0.	\label{2_eq_total_energy}
\end{align}
Note that $u_i = \partial_\nu u_i = 0$ on $\Gamma_i$ implies $\partial_t u_i = \partial_\nu \partial_t u_i = 0$ on $\Gamma_i$ for $i = 1, 2.$
The estimate shows that the energy of the transmission problem is decreasing in time and the dissipation is caused by the damped part $u_1.$

Our main result, Theorem~\ref{thm_exp_stability_structurally_damped_undamped} below, states that the damping in $\Omega_1$ is strong enough to achieve exponential decrease of the energy, i.e. there exist constants
$C, \kappa > 0$ such that
\[
	E(t) \leq CE(0)e^{-\kappa t}
\]
holds for all $t \geq 0.$ To prove this, we first study the resolvent and the spectrum of the first-order system related to \eqref{2_eq_tm_damped}--\eqref{2_eq_tm_3} in Section~2. In the proof of exponential stability, we also need an a priori estimate on the damped part which is obtained in Section~3 with the help of the interpolation-extrapolation scales of Banach spaces. Finally, the results from Section~2 and 3 are used to prove the main result on exponential stability in Section~4.

\section{The spectrum of the first-order system}\label{subsec:2_well_posedness}
Setting $U:= (u_1,u_2,v_1,v_2)^\top$ with $v_j:=\partial_t u_j$, we rewrite the transmission problem \eqref{2_eq_tm_damped}-\eqref{2_eq_tm_init_2} as
\begin{equation}\label{2-1}
	\partial_t U (t)  - \AA U (t)= 0\; (t>0), \quad U(0)=U_0
\end{equation}
where the operator $\AA$ acts in form of the matrix
\[ A(D) := \begin{pmatrix}
  0 & 0 & 1 & 0 \\
  0 & 0 & 0 & 1 \\
  -\Delta^2 & 0 & \rho\Delta & 0\\
  0 & -\Delta^2 & 0 & 0
\end{pmatrix}.\]
As the basic space for the first two components $(u_1,u_2)$, we will choose
\begin{align*}
	X(\Omega) := \big\{ (u_1, u_2) \in H^2(\Omega_1) \times H^2(\Omega_2) :\ & u_1 = \partial_\nu u_1 = 0 \text{ on } \Gamma_1, \\
		& u_1 = u_2 \text{ on } \Gamma, \, \partial_\nu u_1 = \partial_\nu u_2 \text{ on } \Gamma \big\}.
\end{align*}

\begin{remark}
  \label{2.1}
  a) Let $(u_1,u_2)\in H^2(\Omega_1)\times H^2(\Omega_2)$. Then the conditions $u_1=u_2$, $\partial_\nu u_1=\partial_\nu u_2$ on $\Gamma$ are equivalent to $u:= \chi_{\Omega_1} u_1 + \chi_{\Omega_2} u_2 \in H^2(\Omega)$, where $\chi_{\Omega_j}$ stands for the characteristic function of $\Omega_j$, i.e. $\chi_{\Omega_j}(x)=1$ for $x\in \Omega_j$ and $\chi_{\Omega_j}(x)=0$ else. Therefore, we have
  \[ X(\Omega) = \{ (u|_{\Omega_1}, u|_{\Omega_2}): u\in H^2_0(\Omega)\}.\]
  In the following, we will several times use the identification of $(u_1,u_2)$ and $u$.

  b) The norm in $X(\Omega)$ is defined as
  \[ \|(u_1,u_2)\|_{X(\Omega)} := \Big( \|\Delta u_1\|_{L^2(\Omega_1)}^2 + \|\Delta u_2\|_{L^2(\Omega_2)}^2\Big)^{1/2}.\]
  Note that this norm is equivalent to the standard norm $(\|u_1\|_{H^2(\Omega_1)}^2 + \|u_2\|_{H^2(\Omega_2)}^2)^{1/2}$. In fact, due to the invertibility of the Dirichlet Laplacian in $\Omega$, the norms $\|\Delta u\|_{L^2(\Omega)}$ and $\|u\|_{H^2(\Omega)}$ are equivalent on the space $H^2(\Omega)\cap H^1_0(\Omega)$. As $H^2_0(\Omega)$ is a closed subspace of $H^2(\Omega)\cap H^1_0(\Omega)$, these norms are also equivalent on $H^2_0(\Omega)$, and now the assertion follows from part a) (see also \cite{Has14}, Proposition 2.1 and Proposition 2.2).
\end{remark}

We say that the transmission conditions \eqref{2_eq_tm_2} and \eqref{2_eq_tm_3} are weakly satisfied if
\begin{align}
	\langle \Delta^2 u_1 & - \rho \Delta v_1, \phi_1 \rangle_{L^2(\Omega_1)}
		+ \langle \Delta^2 u_2, \phi_2 \rangle_{L^2(\Omega_2)} \notag \\
	& = \langle \Delta u_1, \Delta \phi_1 \rangle_{L^2(\Omega_1)} + \langle \Delta u_2, \Delta \phi_2 \rangle_{L^2(\Omega_2)}
		+ \rho \langle \nabla v_1, \nabla \phi_1 \rangle_{L^2(\Omega_1)}
\label{eq_tm_weak}
\end{align}
holds for all $(\phi_1, \phi_2) \in X(\Omega)$. Let
\[ \HH := X(\Omega) \times L^2(\Omega_1) \times L^2(\Omega_2).\]
Then we define the operator $\AA\colon \HH\supset D(\AA)\to \HH$ by

\begin{align*}
	D(\AA) & := \big\{ (u_1, u_2, v_1, v_2) \in  X(\Omega)\times X(\Omega):  \Delta^2 u_1 \in L^2(\Omega_1),\,  \Delta^2 u_2\in L^2(\Omega_2),\\
&\qquad   \eqref{2_eq_tm_2} \text{ and } \eqref{2_eq_tm_3} \text{ are weakly satisfied} \big\}
\end{align*}
and $\AA U := A(D) U \;(U\in D(\AA))$.

We will see in Lemma~\ref{lem_strong_solution} below that the functions in $D(\AA)$ are sufficiently smooth and the transmission conditions hold  in the sense of traces.

\begin{theorem}\label{2_thm_well_posedness_damped_undamped}
	The operator $\AA$ is the generator of a $C_0$-semigroup of contractions on the Hilbert space $\HH.$ Therefore, for all
	$U_0 \in D(\AA)$ the Cauchy problem \eqref{2-1}
	has a unique classical solution $U \in C^1([0, \infty), \HH)$ with $U(t) \in D(\AA)$ for all $t \geq 0.$
\end{theorem}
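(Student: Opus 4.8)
The plan is to apply the Lumer–Phillips theorem: I will show that $\AA$ is dissipative and that $\lambda - \AA$ is surjective for some (equivalently all) $\lambda > 0$, which together with density of $D(\AA)$ in $\HH$ yields that $\AA$ generates a contraction semigroup. The inner product on $\HH$ should be chosen as the ``energy'' inner product, namely $\langle U, \tilde U\rangle_\HH := \langle \Delta u_1, \Delta \tilde u_1\rangle_{L^2(\Omega_1)} + \langle \Delta u_2, \Delta \tilde u_2\rangle_{L^2(\Omega_2)} + \langle v_1, \tilde v_1\rangle_{L^2(\Omega_1)} + \langle v_2, \tilde v_2\rangle_{L^2(\Omega_2)}$, so that $\|U\|_\HH^2 = 2E$. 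With this choice, dissipativity is essentially the energy identity \eqref{2_eq_total_energy}: for $U = (u_1,u_2,v_1,v_2) \in D(\AA)$ one computes $\Re\langle \AA U, U\rangle_\HH = \Re\big(\langle \Delta v_1, \Delta u_1\rangle + \langle \Delta v_2, \Delta u_2\rangle + \langle \rho\Delta u_1 - \Delta^2 u_1, v_1\rangle - \langle \Delta^2 u_2, v_2\rangle\big)$, and using the weak formulation \eqref{eq_tm_weak} with test functions $(\phi_1,\phi_2) = (v_1,v_2) \in X(\Omega)$ to replace the $\Delta^2$ terms, the curvature terms cancel and one is left with $\Re\langle \AA U, U\rangle_\HH = -\rho\|\nabla v_1\|_{L^2(\Omega_1)}^2 \le 0$.

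For surjectivity, fix $\lambda > 0$ and $F = (f_1, f_2, g_1, g_2) \in \HH$; I need $U \in D(\AA)$ with $(\lambda - \AA)U = F$. The first two rows give $v_j = \lambda u_j - f_j$, so the system reduces to the elliptic transmission problem for $(u_1, u_2) \in X(\Omega)$:
\begin{align*}
	\lambda^2 u_1 + \Delta^2 u_1 - \rho\lambda\Delta u_1 &= g_1 + \lambda f_1 - \rho\Delta f_1 \quad\text{in }\Omega_1,\\
	\lambda^2 u_2 + \Delta^2 u_2 &= g_2 + \lambda f_2 \quad\text{in }\Omega_2,
\end{align*}
together with the weak transmission conditions. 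I would set this up variationally: define the sesquilinear form $a(u, \phi) := \lambda^2\langle u_1,\phi_1\rangle_{L^2(\Omega_1)} + \lambda^2\langle u_2,\phi_2\rangle_{L^2(\Omega_2)} + \langle \Delta u_1, \Delta\phi_1\rangle_{L^2(\Omega_1)} + \langle \Delta u_2,\Delta\phi_2\rangle_{L^2(\Omega_2)} + \rho\lambda\langle\nabla u_1,\nabla\phi_1\rangle_{L^2(\Omega_1)}$ on $X(\Omega) \times X(\Omega)$. Since $\lambda > 0$, this form is bounded and coercive on $X(\Omega)$ (the $\|\Delta u_1\|^2 + \|\Delta u_2\|^2$ part already dominates the $X(\Omega)$-norm by Remark~\ref{2.1}b), and the lower-order terms are nonnegative), so the Lax–Milgram lemma produces a unique $u = (u_1,u_2) \in X(\Omega)$ with $a(u,\phi) = \langle \text{RHS}, \phi\rangle$ for all $\phi \in X(\Omega)$. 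Testing with $\phi \in C_c^\infty(\Omega_1) \times C_c^\infty(\Omega_2)$ shows the PDEs hold distributionally, hence $\Delta^2 u_j \in L^2(\Omega_j)$; comparing the variational identity with the definition of ``weakly satisfied'' in \eqref{eq_tm_weak} shows that \eqref{2_eq_tm_2} and \eqref{2_eq_tm_3} hold weakly. Thus $U := (u_1, u_2, \lambda u_1 - f_1, \lambda u_2 - f_2) \in D(\AA)$ and $(\lambda - \AA)U = F$.

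Density of $D(\AA)$ in $\HH$ follows because $D(\AA)$ contains $(\phi_1, \phi_2, \psi_1, \psi_2)$ with all components in $C_c^\infty$ of the respective interior domains vanishing near all boundaries (such tuples trivially satisfy all the boundary and transmission conditions and have $\Delta^2$ of them in $L^2$), and these are dense in $\HH = X(\Omega) \times L^2(\Omega_1) \times L^2(\Omega_2)$. Then Lumer–Phillips gives the semigroup of contractions, and the final assertion about classical solutions is the standard consequence of semigroup theory for initial data in $D(\AA)$. The main obstacle is the bookkeeping around the weak transmission conditions: one must be careful that the variational solution from Lax–Milgram indeed satisfies \eqref{2_eq_tm_2}–\eqref{2_eq_tm_3} in the precise weak sense used to define $D(\AA)$, and that the dissipativity computation correctly uses this weak formulation rather than integration by parts that would require more regularity than a priori available — the smoothness upgrade is deferred to Lemma~\ref{lem_strong_solution}.
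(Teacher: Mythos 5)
Your overall strategy (Lumer--Phillips, dissipativity read off from the weak transmission conditions \eqref{eq_tm_weak} tested against $(\phi_1,\phi_2)=(v_1,v_2)$, and surjectivity of $\lambda-\AA$ via Lax--Milgram on $X(\Omega)$) is exactly the route the paper takes, with $\lambda=1$ there instead of a general $\lambda>0$; those parts are fine, up to the typo $\rho\Delta u_1$ for $\rho\Delta v_1$ and the care you already flag about writing the right-hand side functional with $\rho\langle\nabla f_1,\nabla\phi_1\rangle$ in place of $-\rho\langle\Delta f_1,\phi_1\rangle$ so that the weak transmission conditions come out for $v_1=\lambda u_1-f_1$.

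There is, however, a genuine gap in your density argument. You claim that tuples with all components in $C_c^\infty$ of $\Omega_1$ and $\Omega_2$ are dense in $\HH=X(\Omega)\times L^2(\Omega_1)\times L^2(\Omega_2)$. This is true for the velocity components, but false for the first component: under the identification of Remark~\ref{2.1}a), $X(\Omega)\cong H_0^2(\Omega)$ with an $H^2$-equivalent norm, and the closure of $C_c^\infty(\Omega_1)\times C_c^\infty(\Omega_2)$ in that norm is $H_0^2(\Omega_1)\times H_0^2(\Omega_2)$, whose elements have vanishing trace and vanishing normal derivative on $\Gamma$ from both sides. This is a proper closed subspace of $X(\Omega)$ --- any $u\in H_0^2(\Omega)$ with $u|_\Gamma\neq 0$ cannot be approximated by such tuples. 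The interface $\Gamma$ is a hypersurface and has positive $H^2$-capacity, so removing it changes the space. The paper sidesteps this entirely: it proves dissipativity and surjectivity of $1-\AA$ first, and then \emph{deduces} density of $D(\AA)$ from these two facts via the standard result for dissipative operators on reflexive (here Hilbert) spaces (\cite{Paz12}, Theorem 4.6). You should replace your direct density argument by this deduction; with that change the proof is complete.
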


\begin{proof}
By the definition of $D(\AA)$ and the weak transmission conditions \eqref{eq_tm_weak}, it is immediately seen that
\[ \Re\,\langle \AA U,U\rangle_{\HH}  = - \rho \Vert \nabla v_1 \Vert_{L^2(\Omega_1)}^2\quad (U\in D(\AA)).\]
Hence, $\AA$ is dissipative. We want to show that $1-A$ is surjective. For this, let $F = (f_1, f_2, g_1, g_2)^\top \in \HH.$ We have to  find
	$U = (u_1, u_2, v_1, v_2)^\top \in D(\AA)$ satisfying
	\begin{align*}
		u_1 - v_1 								& = f_1, \\
		u_2 -v_2 								& = f_2, \\
		v_1 + \Delta^2 u_1 - \rho \Delta v_1	& = g_1, \\
		v_2 + \Delta^2 u_2						& = g_2.
	\end{align*}	
	Plugging in $v_i = u_i - f_i$ for $i = 1, 2$ in the third and fourth equation yields that we have to solve
\begin{align}
u_1 + \Delta^2 u_1 - \rho \Delta u_1	& = g_1 + f_1 - \rho \Delta f_1, \label{2-2} \\
u_2 + \Delta^2 u_2						& = g_2 + f_2\label{2-3}
\end{align}
as equalities in $L^2(\Omega_1)$ and $L^2(\Omega_2),$ respectively.

 We define the continuous sesquilinear form $B \colon X(\Omega) \times X(\Omega) \to \C$ by
	\begin{align*}
		B(u, \phi) 	& = \langle u_1, \phi_1 \rangle_{L^2(\Omega_1)} + \langle \Delta u_1, \Delta \phi_1 \rangle_{L^2(\Omega_2)}
						+ \rho \langle \nabla u_1, \nabla \phi_1 \rangle_{L^2(\Omega_1)} \\
					& \quad + \langle u_2, \phi_2 \rangle_{L^2(\Omega_2)} + \langle \Delta u_2, \Delta \phi_2 \rangle_{L^2(\Omega_2)}	
	\end{align*}
	for $u = (u_1, u_2), \phi = (\phi_1, \phi_2) \in X(\Omega)$.  Since
	\[
		\Re B(u, u) \geq \Vert (u_1, u_2) \Vert_{X(\Omega)}^2 \qquad (u \in X(\Omega)),
	\]
	$B$ is coercive. Obviously, the mapping $\Lambda \colon X(\Omega) \to \C$ defined by
	\begin{align*}
		\Lambda(\phi) :=
		\langle g_1 + f_1, \phi_1 \rangle_{L^2(\Omega_1)} + \rho \langle \nabla f_1, \nabla\phi_1 \rangle_{L^2(\Omega_1)} + \langle g_2 + f_2, \phi_2 \rangle_{L^2(\Omega_2)}
	\end{align*}
	for $\phi = (\phi_1, \phi_2) \in X(\Omega)$ is linear and continuous. By the theorem of Lax-Milgram, there exists a unique $u = (u_1, u_2) \in X(\Omega)$ such that
	$B(u, \phi) = \Lambda(\phi)$ holds for all $\phi \in X(\Omega)$. In particular, choosing $(\phi_1, \phi_2) \in C_0^\infty(\Omega_1) \times C_0^\infty(\Omega_2) \subset X(\Omega)$,
	we see that \eqref{2-2} and \eqref{2-3} hold in the sense of distributions in $\Omega_1$ and $\Omega_2$, respectively. As the right-hand side of \eqref{2-2} belongs to $L^2(\Omega_1)$, the same holds for the left-hand side. Due to $u_1\in H^2(\Omega_1)$, this yields $\Delta^2 u_1\in L^2(\Omega_1)$. In the same way, we see that \eqref{2-3} holds as equality in $L^2(\Omega_2)$ and that $\Delta^2 u_2\in L^2(\Omega_2)$.

Set $v_1 := u_1-f_1$ and $v_2:= u_2-f_2$. By \eqref{2-2}--\eqref{2-3}, we have
\begin{equation}\label{2-4}
\begin{aligned}
  \Delta^2 u_1 - \rho\Delta v_1 & = - u_1 + g_1 + f_1,\\
  \Delta^2 u_2 & = - u_2 + g_2 + f_2.
\end{aligned}
\end{equation}
Let  $\phi=(\phi_1,\phi_2)\in X(\Omega)$. Then, because of \eqref{2-4} and $B(u, \phi) = \Lambda(\phi)$, we get
\begin{align*}
 \langle \Delta^2 u_1 & - \rho \Delta v_1, \phi_1 \rangle_{L^2(\Omega_1)}
		+ \langle \Delta^2 u_2, \phi_2 \rangle_{L^2(\Omega_2)} \\
& = \langle - u_1 + g_1 + f_1, \phi_1\rangle_{L^2(\Omega_1)} + \langle - u_2 + g_2 + f_2, \phi_2\rangle_{L^2(\Omega_2)} \\
& =  \langle \Delta u_1, \Delta \phi_1 \rangle_{L^2(\Omega_1)} + \langle \Delta u_2, \Delta \phi_2 \rangle_{L^2(\Omega_2)}
			+ \rho \langle \nabla v_1, \nabla \phi_1 \rangle_{L^2(\Omega_1)}.
\end{align*}
Therefore, the weak transmission conditions \eqref{eq_tm_weak} are satisfied. Altogether, we have seen that $U:= (u_1,u_2,v_1,v_2)^\top $ belongs to $D(\AA)$. Because of \eqref{2-2}--\eqref{2-3} and the definition of $v_1,v_2$, we also have $(1-\AA)U=F$. Therefore, $1-\AA$ is surjective which implies that $\AA$ is densely defined (see \cite{Paz12}, Theorem 4.6). An application of the Lumer-Phillips theorem now yields the statement of the theorem.
\end{proof}

\begin{remark}\label{rem_0_resolvent}
In the same way as in the previous proof, one can show that the operator $\AA$ is continuously invertible, i.e. $0$ belongs to the resolvent set $\rho(\AA)$. To show this, we now have to consider
\begin{align}
  	\Delta^2 u_1		& = g_1 - \rho \Delta f_1, \label{2-5}\\
	\Delta^2 u_2		& = g_2 \label{2-6}
\end{align}
instead of \eqref{2-2}--\eqref{2-3}. The sesquilinear form $B$ and the functional $\Lambda$ are now defined by  $B(u, \phi) = \langle u, \phi \rangle_{X(\Omega)}$ and
			\begin{align*}
				\Lambda(\phi) :=
				\langle g_1, \phi_1 \rangle_{L^2(\Omega_1)} + \rho \langle \nabla f_1, \nabla\phi_1 \rangle_{L^2(\Omega_1)} + \langle g_2, \phi_2 \rangle_{L^2(\Omega_2)}
			\end{align*}
			for $u = (u_1, u_2), \phi = (\phi_1, \phi_2) \in X(\Omega)$.

As before, we see that there exists a unique solution $u = (u_1, u_2) \in X(\Omega)$ satisfying $B(u, \phi) = \Lambda(\phi)$ for all $\phi \in X(\Omega)$. Moreover, setting $v_j := -f_j$, the vector $U:=(u_1,u_2,v_1,v_2)^\top $ belongs to $D(\AA)$ and satisfies $-\AA U = F$.

On the other hand, if $\tilde U\in D(\AA)$ solves $-\AA \tilde U = F$, then $B(\tilde u,\phi)=\Lambda(\phi)$ holds for all $\phi\in X(\Omega)$ due to the definition of $D(\AA)$ and the weak transmission conditions. Therefore, $U=\tilde U$, and $\AA\colon D(\AA)\to \HH$ is a bijection. Since $\AA$ is the generator of a $C_0$-semigroup, $\AA$ is closed and the continuity of $\AA^{-1}\colon \HH \to \HH$ follows. Therefore, $0\in \rho(A)$.	
\end{remark}

\begin{lemma}\label{lem_strong_solution}
a) The domain of $\AA$ is given by
\begin{equation}
  \label{2-7}
\begin{aligned}
		D(\AA) & = \big\{ (u_1, u_2, v_1, v_2) \in \big(H^4(\Omega_1)\times H^4(\Omega_2)\big)\cap X(\Omega) \times X(\Omega): \\
& \qquad \Delta u_1 = \Delta u_2\text{ on } \Gamma, \, -\rho \partial_\nu v_1 + \partial_\nu \Delta u_1 = \partial_\nu \Delta u_2 \text{ on } \Gamma\big\}.
\end{aligned}
\end{equation}
Here, the equalities on $\Gamma$ can be understood as equalities in the trace spaces $H^{3/2}(\Gamma)$ and $H^{1/2}(\Gamma)$, respectively.

b) The operator $\AA$ has compact resolvent and, consequently, discrete spectrum.
\end{lemma}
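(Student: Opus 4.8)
The plan is to bootstrap the weak formulation \eqref{eq_tm_weak} into genuine $H^4$ regularity via standard elliptic theory for the bi-Laplacian, and then to read off the transmission conditions \eqref{2_eq_tm_2}--\eqref{2_eq_tm_3} as trace identities. Let $U=(u_1,u_2,v_1,v_2)\in D(\AA)$ and set $F=(f_1,f_2):=(\Delta^2 u_1-\rho\Delta v_1,\Delta^2 u_2)\in L^2(\Omega_1)\times L^2(\Omega_2)$. Since $v_1\in X(\Omega)\subset H^2(\Omega_1)$, we actually have $\Delta^2 u_1=f_1+\rho\Delta v_1\in L^2(\Omega_1)$ from the outset. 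The first step is local interior regularity: on any $\Omega'\Subset\Omega_j$ the equation $\Delta^2 u_j=(\text{known }L^2\text{ function})$ together with $u_j\in H^2$ yields $u_j\in H^4_{\mathrm{loc}}(\Omega_j)$ by the standard interior estimate for $\Delta^2$.

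The second, and main, step is regularity up to the boundary. This splits into three pieces. (i) Near $\Gamma_1$ we have the clamped (Dirichlet) boundary conditions $u_1=\partial_\nu u_1=0$ from the definition of $X(\Omega)$; since $\Gamma_1$ is $C^4$, Agmon--Douglis--Nirenberg theory for the Dirichlet problem of $\Delta^2$ gives $u_1\in H^4$ in a neighborhood of $\Gamma_1$. (ii) Near $\Gamma$, the natural thing is to treat $\Omega_1$ and $\Omega_2$ separately. On $\Gamma$ we already know from $X(\Omega)$ that $u_1=u_2$ and $\partial_\nu u_1=\partial_\nu u_2$ in the trace sense, so each $u_j$ has well-defined Dirichlet data $(u_j|_\Gamma,\partial_\nu u_j|_\Gamma)\in H^{3/2}(\Gamma)\times H^{1/2}(\Gamma)$. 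Since $u_2\in H^2(\Omega_2)$ solves $\Delta^2 u_2=f_2\in L^2(\Omega_2)$ with these Dirichlet data, elliptic regularity for the Dirichlet bi-Laplacian on the $C^4$ domain $\Omega_2$ upgrades $u_2$ to $H^4(\Omega_2)$ provided the Dirichlet data are regular enough; a priori the data lie only in $H^{3/2}\times H^{1/2}$, so one argues iteratively — first gain that $u_2\in H^3(\Omega_2)$, which improves the traces of $u_2$ (hence of $u_1$) to $H^{5/2}\times H^{3/2}$, then feed this back to get $u_1\in H^{7/2}(\Omega_1)$ near $\Gamma$, improving the traces again, and close the loop at $H^4$. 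One must be a little careful that each domain sees only Dirichlet-type data (the values and normal derivatives match across $\Gamma$), so the weak transmission conditions \eqref{eq_tm_weak} are not needed for this regularity step at all; they are only used afterwards. (iii) Combining (i)--(iii) with the interior estimate gives $u_1\in H^4(\Omega_1)$, $u_2\in H^4(\Omega_2)$.

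The third step is to extract the transmission conditions. Once $u_j\in H^4(\Omega_j)$, all the quantities $\Delta u_j|_\Gamma\in H^{3/2}(\Gamma)$ and $\partial_\nu\Delta u_j|_\Gamma,\ \partial_\nu v_1|_\Gamma\in H^{1/2}(\Gamma)$ make classical sense. Integrating by parts twice in \eqref{eq_tm_weak} — using Green's formula $\langle\Delta^2 u_j,\phi_j\rangle=\langle\Delta u_j,\Delta\phi_j\rangle+\langle\partial_\nu\Delta u_j,\phi_j\rangle_\Gamma-\langle\Delta u_j,\partial_\nu\phi_j\rangle_\Gamma$ on each $\Omega_j$ (with the appropriate orientation of $\nu$, noting $\phi_j=\partial_\nu\phi_j=0$ on $\Gamma_1$) and $\langle\nabla v_1,\nabla\phi_1\rangle=-\langle\Delta v_1,\phi_1\rangle+\langle\partial_\nu v_1,\phi_1\rangle_\Gamma$ — and then letting $(\phi_1|_\Gamma,\partial_\nu\phi_1|_\Gamma)=(\phi_2|_\Gamma,\partial_\nu\phi_2|_\Gamma)$ range over all of $H^{3/2}(\Gamma)\times H^{1/2}(\Gamma)$ (which is possible since $X(\Omega)$ surjects onto these trace pairs), the boundary terms must vanish for all such data; matching the coefficient of $\phi|_\Gamma$ gives $-\rho\partial_\nu v_1+\partial_\nu\Delta u_1=\partial_\nu\Delta u_2$ on $\Gamma$, and matching the coefficient of $\partial_\nu\phi|_\Gamma$ gives $\Delta u_1=\Delta u_2$ on $\Gamma$. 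The converse inclusion is easy: any $U$ in the set on the right of \eqref{2-7} is readily checked to satisfy \eqref{eq_tm_weak} by reversing these integrations by parts, so $D(\AA)$ equals that set. Finally, for part b): by part a) the graph norm on $D(\AA)$ controls $\|u_1\|_{H^4(\Omega_1)}+\|u_2\|_{H^4(\Omega_2)}+\|v_1\|_{H^2(\Omega_1)}+\|v_2\|_{H^2(\Omega_2)}$, and since $\Omega_1,\Omega_2$ are bounded with smooth boundary, the embeddings $H^4(\Omega_j)\hookrightarrow H^2(\Omega_j)$ and $H^2(\Omega_j)\hookrightarrow L^2(\Omega_j)$ are compact (Rellich--Kondrachov); hence $D(\AA)\hookrightarrow\HH$ compactly, so $\AA^{-1}$ (which exists by Remark~\ref{rem_0_resolvent}) is compact, and therefore the spectrum of $\AA$ is discrete. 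The main obstacle is the boundary-regularity bootstrap at the interface $\Gamma$ in step two: one has to be careful to phrase it so that each subdomain is handled with a genuine elliptic boundary-value problem (Dirichlet data from $X(\Omega)$), iterating the trace-smoothing, rather than trying to treat the coupled transmission problem directly; the restriction $n\le 4$ presumably enters to ensure Sobolev embeddings make the lower-order and damping terms $\rho\Delta v_1$ harmless in these estimates.
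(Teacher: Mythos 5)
There is a genuine gap in your second step, at the interface $\Gamma$. You propose to treat each subdomain as a Dirichlet problem for $\Delta^2$ with data $(u_j|_\Gamma,\partial_\nu u_j|_\Gamma)$ and to bootstrap, but the bootstrap has no starting point: from $u_j\in H^2(\Omega_j)$ the traces lie only in $H^{3/2}(\Gamma)\times H^{1/2}(\Gamma)$, which is exactly the regularity compatible with an $H^2$ solution, so elliptic theory returns $u_j\in H^2$ and nothing more. The traces are not external data with surplus smoothness --- their regularity is precisely what you are trying to prove --- so the claim ``first gain that $u_2\in H^3(\Omega_2)$'' is unjustified and the iteration never gets off the ground. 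Worse, your assertion that the weak transmission conditions \eqref{eq_tm_weak} ``are not needed for this regularity step at all'' is false: the conclusion itself fails without them. In one dimension, take $\Omega=(-1,1)$, $\Omega_2=(-\tfrac12,\tfrac12)$ and $u$ equal to a suitable multiple of $(x-\tfrac12)_+^2$ near $x=\tfrac12$ (cut off appropriately); then $u\in H^2$, $u$ and $u'$ match across the interface, and $u''''=0$ in each subdomain, yet $u\notin H^3$ near the interface because $u''$ jumps. It is exactly the weak form of the conditions $\Delta u_1=\Delta u_2$ and $-\rho\partial_\nu v_1+\partial_\nu\Delta u_1=\partial_\nu\Delta u_2$ that rules this out, so any correct proof must use \eqref{eq_tm_weak} \emph{before} establishing $H^4$ regularity, not only afterwards as you do.

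The paper circumvents the difficulty by never doing a regularity bootstrap on the given element of $D(\AA)$. Instead, given $\tilde U\in D(\AA)$ it sets $F:=-\AA\tilde U$ and \emph{constructs} a strong solution of $-\AA U=F$ with the claimed regularity: it lifts the inhomogeneity $-\rho\partial_\nu f_1$ in the third-order interface condition by a corrector $h\in H^4(\Omega_1)$ obtained from the trace retraction, and then solves a single clamped bi-Laplacian problem $\BB w=G$ on all of $\Omega$ with $\BB=\Delta^2$, $D(\BB)=H^4(\Omega)\cap H^2_0(\Omega)$; the global $H^4(\Omega)$ regularity of $w$ automatically yields continuity of $\Delta w$ and $\partial_\nu\Delta w$ across $\Gamma$. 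One checks that $U$ so built lies in $D(\AA)$ and satisfies $-\AA U=F$, and then the uniqueness from Remark~\ref{rem_0_resolvent} forces $U=\tilde U$. Your third step (reading off the trace identities by Green's formula once $H^4$ regularity is known, using surjectivity of the trace map on $X(\Omega)$) and your part b) via Rellich--Kondrachov are fine and match the paper, but they rest on the regularity you have not actually established.
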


\begin{proof}
a) Let $\tilde U\in D(\AA)$ and $F=(f_1,f_2,g_1,g_2)^\top:= -\AA \tilde U\in \HH$. To show the statement, we construct a strong solution $U$ of $-\AA U = F$ belonging to the right-hand side of \eqref{2-7} and show that $U=\tilde U$. So we consider
	\begin{align}
		\Delta^2 u_1 & = g_1 - \rho \Delta f_1,		\label{eq_tm_strong_1} \\
		\Delta^2 u_2 & = g_2 \label{eq_tm_strong_2}
	\end{align}
	in $L^2(\Omega_1) \times L^2(\Omega_2)$ with boundary conditions
	\begin{align}
		u_1 = \partial_\nu u_1 = 0 \text{ on } \Gamma_1 \label{eq_tm_strong_3}
	\end{align}
and transmission conditions
 	\begin{align}
		u_1 - u_2 & = 0,  \label{eq_tm_strong_4}\\
		\partial_\nu u_1 - \partial_\nu u_2 & = 0, \label{eq_tm_strong_5}\\
		\partial_\nu^2 u_1 - \partial_\nu^2 u_2 & = 0,  \label{eq_tm_strong_6}\\
		\partial_\nu^3 u_1 - \partial_\nu^3 u_2 & = -\rho \partial_\nu f_1 \label{eq_tm_strong_7}.
	\end{align}
Concerning the higher-order transmission conditions \eqref{eq_tm_strong_6} and \eqref{eq_tm_strong_7}, note that  for all $(u_1,u_2)\in H^4(\Omega_1)\times H^4(\Omega_2)$ satisfying \eqref{eq_tm_strong_4} and \eqref{eq_tm_strong_5}  all tangential derivatives of $u_1-u_2$ and $\partial_\nu u_1-\partial_\nu u_2$ along $\Gamma$ disappear. Therefore, for such $u$ the transmission conditions \eqref{eq_tm_strong_6}--\eqref{eq_tm_strong_7} are equivalent to the conditions
	\begin{align*}
		\Delta u_1 - \Delta u_2 & = 0, \\
		\partial_\nu \Delta u_1 - \partial_\nu \Delta u_2 & = -\rho \partial_\nu f_1.
	\end{align*}

Define the operator $\BB\colon L^2(\Omega)\supset D(\BB)\to L^2(\Omega)$ by $D(\BB):= H^4(\Omega)\cap H^2_0(\Omega)$ and $\BB w := \Delta^2 w$. Then, $\BB$ is a selfadjoint operator with $0\in\rho(\BB)$. To  construct a strong solution of the transmission problem \eqref{eq_tm_strong_1}--\eqref{eq_tm_strong_7}, we first eliminate the inhomogeneity on the right-hand side of \eqref{eq_tm_strong_7}.  By \cite{Tr78}, Section 4.7.1, p. 330, the mapping
	\begin{align*}
		\mathscr{R} h :=
		\left( h\vert_{\partial \Omega_1}, \partial_\nu h\vert_{\partial \Omega_1}, \partial_\nu^2 h\vert_{\partial \Omega_1}, \partial_\nu^3 h\vert_{\partial \Omega_1} \right)^\top
	\end{align*}
	is a retraction from $H^4(\Omega_1)$ onto $\prod_{j=0}^3 H^{4-1/2-j}(\partial \Omega_1).$ Therefore, there exists a function $h \in H^4(\Omega_1)$ such that
	\begin{align*}
		\mathscr{R} h = (0, 0, 0, -\chi_\Gamma \rho \partial_\nu f_1)^\top.
	\end{align*}
Here again $\chi_\Gamma$ stands for the characteristic function of $\Gamma$.
	We define $w := \BB^{-1}G \in H^4(\Omega) \cap H^2_0(\Omega),$ where
	\begin{align*}
		G = \chi_{\Omega_1} (g_1 - \rho \Delta f_1 - \Delta^2 h) + \chi_{\Omega_2} g_2 \in L^2(\Omega).
	\end{align*}
Finally, we set $u_1 := w\vert_{\Omega_1} + h$ and $u_2 := w\vert_{\Omega_2}.$ Then, $u = (u_1, u_2) \in H^4(\Omega_1) \times H^4(\Omega_2)$ satisfies
	the strong transmission problem \eqref{eq_tm_strong_1}--\eqref{eq_tm_strong_7}.
Therefore, $U:= (u_1,u_2,v_1,v_2)^\top$ with $v_j:=-f_j$ belongs to the right-hand side of \eqref{2-7} and solves $-A(D) U = F$.

	On the other hand, 	using integration by parts and the fact that $u$ solves the strong transmission problem, we see that $U$ satisfies the weak transmission conditions \eqref{eq_tm_weak}. Therefore, $U$ belongs to $D(\AA)$ and solves $-\AA U = F$. By Remark~\ref{rem_0_resolvent}, this solution is unique which implies $U=\tilde U$.

b) Due to a), we have
\[ D(\AA) \subset \big(H^4(\Omega_1)\times H^4(\Omega_2)\big)\cap X(\Omega) \times X(\Omega).\]
By the Rellich-Kondrachov theorem, the space on the right-hand side is compactly embedded into $\HH$. Therefore, $\AA^{-1}$ is compact, and the spectrum of $\AA$ is discrete.
\end{proof}

We already know that the spectrum of $\AA$ is discrete and that $0$ is no eigenvalue. In fact, there are no purely imaginary eigenvalues of $\AA$, as the next result shows.

\begin{theorem}
\label{2_prop_imaginary_resolvent}
	The imaginary axis is a subset of the resolvent set of $\AA,$ i.e. $\mathrm{i}\R \subset \rho(\AA).$
\end{theorem}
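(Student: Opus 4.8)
The plan is to combine two facts already established: $\AA$ has discrete spectrum (Lemma~\ref{lem_strong_solution}b)) and $0\in\rho(\AA)$ (Remark~\ref{rem_0_resolvent}). Hence it suffices to show that no $\mathrm{i}\beta$ with $\beta\in\R\setminus\{0\}$ is an eigenvalue. So I would start by assuming $U=(u_1,u_2,v_1,v_2)^\top\in D(\AA)\setminus\{0\}$ is an eigenvector for $\mathrm{i}\beta$, take the real part of $\langle\AA U,U\rangle_\HH=\mathrm{i}\beta\|U\|_\HH^2$, and use the dissipation identity $\Re\langle\AA U,U\rangle_\HH=-\rho\|\nabla v_1\|_{L^2(\Omega_1)}^2$ from the proof of Theorem~\ref{2_thm_well_posedness_damped_undamped} to conclude $\nabla v_1=0$ in $\Omega_1$, i.e.\ $v_1$ is constant on each connected component of $\Omega_1$.

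Next I would extract everything possible on the damped side. Since the first row of $\AA U=\mathrm{i}\beta U$ reads $v_1=\mathrm{i}\beta u_1$ and $\beta\ne 0$, the function $u_1$ is also constant on each component of $\Omega_1$, so $\Delta^2 u_1=\Delta v_1=0$ there; feeding this into the third row, $-\Delta^2 u_1+\rho\Delta v_1=\mathrm{i}\beta v_1$, gives $v_1\equiv 0$, hence $u_1\equiv 0$ in $\Omega_1$. Then I would pass to $\Omega_2$ using the description of $D(\AA)$ in Lemma~\ref{lem_strong_solution}a): since all the $\Omega_1$-side traces vanish, the transmission conditions and membership in $X(\Omega)$ force $u_2=\partial_\nu u_2=\Delta u_2=\partial_\nu\Delta u_2=0$ on $\Gamma$, and by the same equivalence of the higher-order transmission conditions used in that lemma (valid since $u_2=\partial_\nu u_2=0$ on $\Gamma$) this amounts to $\partial_\nu^j u_2=0$ on $\Gamma$ for $j=0,1,2,3$. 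Finally, rows two and four of the eigenvalue equation give $v_2=\mathrm{i}\beta u_2$ and $-\Delta^2 u_2=\mathrm{i}\beta v_2$, hence $\Delta^2 u_2=\beta^2 u_2$ in $\Omega_2$.

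The concluding step is a unique-continuation argument. I would extend $u_2$ by zero to $\tilde u:=\chi_{\Omega_2}u_2$; the vanishing of the first four normal derivatives of $u_2\in H^4(\Omega_2)$ on $\Gamma=\partial\Omega_2$ makes $\tilde u\in H^4(\Omega)$, and since $\tilde u$ and $\Delta^2\tilde u$ both vanish on $\Omega_1$ we obtain $(\Delta^2-\beta^2)\tilde u=0$ in all of $\Omega$. As $\Delta^2-\beta^2$ is a constant-coefficient elliptic operator, $\tilde u$ is real-analytic on the connected set $\Omega$ and vanishes on the nonempty open subset $\Omega_1$, so $\tilde u\equiv 0$; hence $u_2\equiv 0$ and $v_2=\mathrm{i}\beta u_2\equiv 0$, i.e.\ $U=0$, contradicting $U\ne 0$. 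Since the spectrum is discrete, this shows $\mathrm{i}\R\setminus\{0\}\subset\rho(\AA)$, and together with $0\in\rho(\AA)$ we are done.

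I expect the main obstacle to be precisely this last step: justifying that the vanishing Cauchy data on $\Gamma$ really put the zero extension $\tilde u$ in $H^4(\Omega)$, so that the equation holds across the interface, and then invoking unique continuation for the fourth-order Helmholtz-type equation $(\Delta^2-\beta^2)u=0$ (cleanest via analytic hypoellipticity of the constant-coefficient elliptic operator, or equivalently via classical unique continuation for products of Helmholtz operators). Everything else is bookkeeping with the eigenvalue equations, the energy identity, and the description of $D(\AA)$.
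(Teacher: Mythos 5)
Your proof is correct and follows the same skeleton as the paper's: reduce to excluding eigenvalues $\mathrm{i}\beta$, $\beta\neq 0$ (the paper likewise relies on the discreteness of the spectrum from Lemma~\ref{lem_strong_solution}b) and Remark~\ref{rem_0_resolvent}), use dissipativity to force $\nabla$ of the damped component to vanish, and then kill $u_2$ via the overdetermined conditions $u_2=\partial_\nu u_2=\Delta u_2=\partial_\nu\Delta u_2=0$ on $\Gamma$ and extension by zero. You differ in two local steps. First, to get $u_1=0$ the paper multiplies the reduced fourth-order equations by $\bar u_1,\bar u_2$, integrates by parts and takes the imaginary part, then combines $\nabla u_1=0$ with $u_1|_{\Gamma_1}=0$; you instead work at the level of the first-order system and feed local constancy of $u_1,v_1$ back into the equation $-\Delta^2u_1+\rho\Delta v_1=\mathrm{i}\beta v_1$ to get $v_1=0$ directly. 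Your variant is marginally more robust, since it does not need every connected component of $\Omega_1$ to meet $\Gamma_1$. Second, for the final uniqueness step the paper extends $\tilde u_2$ by zero to all of $\R^n$ and invokes the absence of $L^2(\R^n)$-eigenvalues of $\Delta^2$, whereas you extend only to $\Omega$ and use analytic hypoellipticity of the constant-coefficient elliptic operator $\Delta^2-\beta^2$ together with connectedness of $\Omega$; both are legitimate, the paper's being a one-line Fourier argument and yours a standard unique-continuation argument that stays on the bounded domain.
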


\begin{proof}
Assume that $U = (u_1, u_2, v_1, v_2)^\top \in D(\AA)$ satisfies $(-\mathrm{i}\lambda + \AA)U = 0$ with $\lambda\in\R\setminus\{0\}$. Then $v_j=\mathrm i \lambda u_j$ for $j=1,2$, and  $(u_1, u_2)$ satisfies
	\begin{align}
		-\Delta^2 u_1 + \mathrm{i}\lambda\rho \Delta u_1 + \lambda^2 u_1	& = 0 \quad \text{ in } \Omega_1,	\label{2_prop_ir_eq_1} \\
		-\Delta^2 u_2 + \lambda^2 u_2										& = 0 \quad \text{ in } \Omega_2	\label{2_prop_ir_eq_2}
	\end{align}
	with boundary conditions $u_1 = \partial_\nu u_1 = 0$ on $\Gamma_1$ and transmission conditions
	\begin{align*}
		u_1 																		& = u_2, \\
		\partial_\nu u_1															& = \partial_\nu u_2, \\
		\Delta u_1																	& = \Delta u_2, \\
		-\mathrm{i}\lambda\rho \partial_\nu u_1 + \partial_\nu \Delta u_1			& = \partial_\nu \Delta u_2
	\end{align*}
	on the common interface $\Gamma.$ \\
	We will show that $(u_1, u_2) = 0.$
	We multiply \eqref{2_prop_ir_eq_1} and \eqref{2_prop_ir_eq_2} with $\overline{u_1}$ and $\overline{u_2},$ respectively. Summing up and performing an integration
	by parts yields
	\begin{align*}
		- \Vert \Delta u_1 \Vert^2_{L^2(\Omega_1)} - \mathrm{i}\lambda\rho \Vert \nabla u_1 \Vert^2_{L^2(\Omega_1)} + \lambda^2 \Vert u_1 \Vert^2_{L^2(\Omega_1)} & \\
		- \Vert \Delta u_2 \Vert^2_{L^2(\Omega_2)} + \lambda^2 \Vert u_2 \Vert^2_{L^2(\Omega_2)} & = 0.
	\end{align*}
	Here we have used the boundary conditions as well as the transmission conditions on $\Gamma.$
	Considering only the imaginary part we get $\Vert \nabla u_1 \Vert_{L^2(\Omega_1)} = 0.$
	Together with $u_1\vert_{\Gamma_1} = 0$ we obtain $u_1 = 0.$
	Therefore, $u_2$ satisfies the boundary value problem
	\begin{align}
			-\Delta^2 u_2 + \lambda^2 u_2										& = 0 \quad \text{ in } \Omega_2, \label{2-8}\\
			u_2 = \partial_\nu u_2 = \Delta u_2	= \partial_\nu \Delta u_2	& = 0 \quad \text{ on } \Gamma = \partial\Omega_2.\label{2-9}
	\end{align}
Because of \eqref{2-9}, the trivial extension $\tilde u_2$ by zero to $\R^n$ belongs to $H^4(\R^n)$ and satisfies $\Delta^2 \tilde u_2 = \lambda^2 \tilde u_2$ in $\R^n$. As $\Delta^2$ in $L^2(\R^n)$ has no eigenvalues, this implies $\tilde u_2=0$ and therefore $u_2=0$. Altogether we have seen $U=0$.
\end{proof}

The last results already implies strong stability of the semigroup $(\TT(t))_{t\ge 0}$ generated by $\AA$, i.e., for any $U_0\in \HH$ we have $\|\TT(t)U_0\|_\HH \to 0 \;(t\to\infty)$ (see \cite{AB88}, Theorem~2.4). We will see in Section~4 that $\TT$ is even exponentially stable.

\section{A priori estimates for the damped plate equation}

For the proof of exponential stability of the coupled damped--undamped plate equation, we need some a priori estimates for the damped part. For this, we will apply the theory of interpolation-extrapolation scales due to Amann (see \cite{Am95}, Chapter V).

Throughout this section, let $U\subset\R^n$ be a bounded $C^4$-domain. We define the operator $A$ in the space $H_0^2(U)\times L^2(U)$ by
\begin{equation}\label{3-6}
\begin{aligned}
 D(A) & := (H^4(U)\cap H^2_0(U))\times H_0^2(U),\\
 A & := \begin{pmatrix}
   0 & 1 \\ -\Delta^2 & \rho\Delta
 \end{pmatrix}.
\end{aligned}
\end{equation}
It was shown in \cite{Chen-Triggiani89}, Proposition~3.1 (see also \cite{DS15}, Theorem~5.1) that $A$ generates an analytic exponentially stable $C_0$-semigroup in $H_0^2(U)\times L^2(U)$. To extrapolate this result to spaces of negative regularity, we need to determine the adjoint operator $A'$ considered in the dual spaces. In the following, $\langle \cdot,\cdot\rangle_{X'\times X}$ denotes the dual pairing in a Banach space $X$. We begin with a small observation on the bi-Laplacian operator.

\begin{remark}
  \label{3.1}
  Under the above assumptions on $U$, the operator $\Delta^2\colon H_0^2(U)\to H^{-2}(U)$ is an isomorphism. In fact, we have the coercive estimate
  \[ \langle \Delta^2u,u\rangle_{H^{-2}(U)\times H_0^2(U)} = \|\Delta u\|_{L^2(U)}^2 \ge C\|u\|_{H^2(U)}^2\quad (u\in H_0^2(U)).\]
  Here the last inequality holds by elliptic regularity and invertibility of the Dirichlet Laplacian $\Delta_D\colon H^2(U)\cap H_0^1(U)\to L^2(U)$. Now an application of the Lax-Milgram theorem yields the invertibility of $\Delta^2\colon H_0^2(U)\to H^{-2}(U)$.
\end{remark}

\begin{lemma}
\label{3.2}
	The adjoint operator $A'$ of $A$ is given by
	\begin{align*}
		A' \colon H^{-2}(U) \times L^2(U) \supset D(A')	& := L^2(U) \times H^2_0(U) \to H^{-2}(U) \times L^2(U), \\
		A' & := \begin{pmatrix} 0 & -\Delta^2 \\ 1 & \rho \Delta \end{pmatrix}.
	\end{align*}
\end{lemma}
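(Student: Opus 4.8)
The statement has two parts: the identification of the domain, $D(A')=L^2(U)\times H^2_0(U)$, and the matrix form of $A'$. Since $A$ generates a $C_0$-semigroup it is densely defined in $X:=H^2_0(U)\times L^2(U)$, so its Banach-space adjoint $A'$ is a well-defined (closed) operator in the dual space, which we identify with $X'=H^{-2}(U)\times L^2(U)$ via the pairing $\langle(\phi,\psi),(u,v)\rangle_{X'\times X}=\langle\phi,u\rangle_{H^{-2}(U)\times H^2_0(U)}+\langle\psi,v\rangle_{L^2(U)}$. The plan is to prove the two inclusions $L^2(U)\times H^2_0(U)\subseteq D(A')$ (together with the claimed formula on that subspace) and $D(A')\subseteq L^2(U)\times H^2_0(U)$ separately.

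For the inclusion ``$\supseteq$'' I would fix $(\phi,\psi)\in L^2(U)\times H^2_0(U)$ and $(u,v)\in D(A)=(H^4(U)\cap H^2_0(U))\times H^2_0(U)$, write $A(u,v)=(v,-\Delta^2 u+\rho\Delta v)$, and transform $\langle(\phi,\psi),A(u,v)\rangle_{X'\times X}=\langle\phi,v\rangle_{H^{-2}\times H^2_0}+\langle\psi,-\Delta^2 u+\rho\Delta v\rangle_{L^2}$ by Green's formula. Because $u,v\in H^2_0(U)$ (so $u=\partial_\nu u=v=\partial_\nu v=0$ on $\partial U$) and $\psi\in H^2_0(U)$, all boundary terms drop and one obtains $\langle\psi,\Delta^2 u\rangle_{L^2}=\langle\Delta\psi,\Delta u\rangle_{L^2}=\langle\Delta^2\psi,u\rangle_{H^{-2}\times H^2_0}$ (the last equality being the definition of the weak bi-Laplacian from Remark~\ref{3.1}) and $\langle\psi,\Delta v\rangle_{L^2}=\langle\Delta\psi,v\rangle_{L^2}$. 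Collecting terms gives $\langle(\phi,\psi),A(u,v)\rangle_{X'\times X}=\langle(-\Delta^2\psi,\,\phi+\rho\Delta\psi),(u,v)\rangle_{X'\times X}$; since $(-\Delta^2\psi,\phi+\rho\Delta\psi)\in H^{-2}(U)\times L^2(U)=X'$ by Remark~\ref{3.1}, this shows $(\phi,\psi)\in D(A')$ and $A'(\phi,\psi)=(-\Delta^2\psi,\phi+\rho\Delta\psi)$, which is exactly the asserted matrix.

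For ``$\subseteq$'' I would take $(\phi,\psi)\in D(A')$, so that $(u,v)\mapsto\langle(\phi,\psi),A(u,v)\rangle_{X'\times X}$ is continuous for $\|(u,v)\|_X$ on $D(A)$. Testing with $v=0$ yields $|\langle\psi,\Delta^2 u\rangle_{L^2}|\le C\|u\|_{H^2(U)}$ for all $u\in H^4(U)\cap H^2_0(U)$. Now $\Delta^2$ restricts to an isomorphism $H^4(U)\cap H^2_0(U)\to L^2(U)$ (elliptic regularity, as for the operator $\BB$ in Section~\ref{subsec:2_well_posedness}) and, by Remark~\ref{3.1}, extends to an isomorphism $H^2_0(U)\to H^{-2}(U)$, so substituting $w:=\Delta^2 u$, which ranges over all of $L^2(U)$, turns the estimate into $|\langle\psi,w\rangle_{L^2}|\le C'\|w\|_{H^{-2}(U)}$. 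Since $L^2(U)$ is dense in $H^{-2}(U)$, $\psi$ then represents a bounded functional on $H^{-2}(U)$, i.e. $\psi\in(H^{-2}(U))'=H^2_0(U)$. With $\psi\in H^2_0(U)$ in hand, testing with $u=0$ and using $\langle\psi,\Delta v\rangle_{L^2}=\langle\Delta\psi,v\rangle_{L^2}$ reduces the continuity requirement to $|\langle\phi,v\rangle_{H^{-2}\times H^2_0}|\le C\|v\|_{L^2(U)}$ for all $v\in H^2_0(U)$; density of $H^2_0(U)$ in $L^2(U)$ then forces $\phi\in(L^2(U))'=L^2(U)$. Hence $D(A')\subseteq L^2(U)\times H^2_0(U)$, and together with the first part the lemma follows.

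I expect the main obstacle to be the ``$\subseteq$'' direction, and within it the step that upgrades an arbitrary $\psi\in L^2(U)$ with $(\phi,\psi)\in D(A')$ to $\psi\in H^2_0(U)$: a priori one has no boundary information on $\psi$, so none of the integrations by parts are legitimate and $A'$ cannot be computed directly. The device that resolves it is to view $\Delta^2$ through Remark~\ref{3.1}, so that testing against the full range $\Delta^2\big(H^4(U)\cap H^2_0(U)\big)=L^2(U)$ and invoking the density $L^2(U)\hookrightarrow H^{-2}(U)$ pins $\psi$ down inside $(H^{-2}(U))'=H^2_0(U)$. Everything else — the Green's-formula identities and the analogous density argument producing $\phi\in L^2(U)$ — is routine.
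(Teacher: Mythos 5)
Your proposal is correct and takes essentially the same route as the paper's proof: the inclusion $L^2(U)\times H^2_0(U)\subseteq D(A')$ via Green's formula with all boundary terms vanishing, and the reverse inclusion by first using the isomorphism $\Delta^2\colon H^2_0(U)\to H^{-2}(U)$ from Remark~\ref{3.1} together with the density of $L^2(U)$ in $H^{-2}(U)$ to force $\psi\in H^2_0(U)$, and then the analogous density argument (after setting $u=0$) to force $\phi\in L^2(U)$. The only cosmetic difference is that you make the elliptic-regularity step $\Delta^2\colon H^4(U)\cap H^2_0(U)\to L^2(U)$ explicit, which the paper uses implicitly.
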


\begin{proof}
	We define $E := H^2_0(U) \times L^2(U)$ and $\widetilde D := L^2(U) \times H^2_0(U) \subset E',$
	where $E' = H^{-2}(U) \times L^2(U).$ Then, for all
	$v = (v_1, v_2) \in \widetilde D$ and $$u = (u_1, u_2) \in D(A) = \left( H^4(U) \cap H^2_0(U) \right) \times H^2_0(U),$$ integration by parts and the definition of
	distributional derivatives yield
	\begin{align*}
		v(Au)
			& = \langle v_1, u_2 \rangle_{L^2(U)} + \langle v_2, -\Delta^2 u_1 \rangle_{L^2(U)} + \langle v_2, \rho \Delta u_2 \rangle_{L^2(U)} \\
			& = \langle v_1, u_2 \rangle_{L^2(U)} + \langle -\Delta v_2, \Delta u_1 \rangle_{L^2(U)} + \langle \rho \Delta v_2, u_2 \rangle_{L^2(U)} \\
			& = \langle -\Delta^2 v_2, u_1 \rangle_{H^{-2}(U) \times H^2_0(U)} + \langle v_1, u_2 \rangle_{L^2(U)} + \langle \rho \Delta v_2, u_2 \rangle_{L^2(U)} \\
			& = w_1(u_1) + w_2(u_2),
	\end{align*}
	with $w_1 := -\Delta^2 v_2 \in H^{-2}(U)$ and $w_2 := v_1 + \rho \Delta v_2 \in L^2(U).$
	Therefore, we set
	\begin{align*}
		 \widetilde A := \begin{pmatrix} 0 & -\Delta^2 \\ 1 & \rho \Delta \end{pmatrix}
	\end{align*}
	with $D(\widetilde A) := \widetilde D.$ With this definition, we have $v(Au) = (\widetilde A v)(u)$ for all $u \in D(A)$ and all $v \in \widetilde D.$
	Moreover, for all $v \in \widetilde D$ the mapping
	$[u \mapsto v(Au)] \colon D(A) \to \C$ is continuous with respect to $\Vert \cdot \Vert_E.$ Hence, we have $\widetilde A \subset A'.$ \\
	
	Let $u \in D(A)$ and $v \in E'.$ Then
	\begin{align}
		v(Au)
			& = \langle v_1, u_2 \rangle_{H^{-2}(U) \times H^2_0(U)} + \langle v_2, -\Delta^2 u_1 \rangle_{L^2(U)} + \langle v_2, \rho \Delta u_2 \rangle_{L^2(U)}.
		\label{eq_vAu}
	\end{align}
	Now, let $v \in D(A').$ Then, the mapping $[u \mapsto v(Au)] \colon D(A) \to \C$ can be extended to a linear, continuous mapping from $E$ to $\C.$
	In particular, considering
	\begin{align*}
		\vert \langle v_2, \Delta^2 u_1 \rangle_{L^2(U)} \vert = \vert v(A (u_1, 0)) \vert \leq C \Vert (u_1, 0) \Vert_E = C \Vert u_1 \Vert_{H^2(U)}
	\end{align*}
	for $u_1 \in H^4(U) \cap H^2_0(U),$		
	it holds that
	\begin{align}
		\varphi \colon H^4(U) \cap H^2_0(U) \to \C, \quad u_1 \mapsto \varphi(u_1) := \langle v_2, \Delta^2 u_1 \rangle_{L^2(U)}
		\label{eq_phi_H2}
	\end{align}
	is continuous with respect to $\Vert \cdot \Vert_{H^2(U)}.$ By Remark~\ref{3.1},
	\begin{align}
		\Delta^2 \colon H^2_0(U) \to H^{-2}(U)	
		\label{eq_Delta2_Iso_H2}
	\end{align}	
	is an isomorphism. Therefore, \eqref{eq_phi_H2} and \eqref{eq_Delta2_Iso_H2} imply that
	\begin{align*}
		\left[ \widetilde u_1 \mapsto \varphi \left( (\Delta^2)^{-1} \widetilde u_1 \right) = \langle v_2, \widetilde u_1 \rangle_{L^2(U)} \right]
		\colon L^2(U) \to \C
	\end{align*}
	is continuous considered as a mapping from $(L^2(U), \Vert \cdot \Vert_{H^{-2}(U)})$ to $\C.$
	By the density of $L^2(U) \subset H^{-2}(U)$, there exists a unique continuous
	extension $$\widetilde \varphi \in \left( H^{-2}(U) \right)' = H^2_0(U)$$ of this mapping. Together with
	\begin{align*}
		\langle \widetilde \varphi, \widetilde u_1 \rangle_{H^2_0(U) \times H^{-2}(U) } = \langle v_2, \widetilde u_1 \rangle_{H^2_0(U) \times H^{-2}(U)}
	\end{align*}
	for $ \widetilde u_1 \in L^2(U),$ we deduce $v_2 = \widetilde \varphi \in H^2_0(U).$ \\
	The fact that $v_2 \in H^2_0(U)$ implies that the last term in \eqref{eq_vAu},
	\begin{align*}
		\left[ u_2 \mapsto \langle v_2, \rho \Delta u_2 \rangle_{L^2(U)} = \langle v_2, \rho \Delta u_2 \rangle_{H^2_0(U) \times H^{-2}(U)} \right]
		\colon H^2_0(U) \to \C,
	\end{align*}
	is continuous on $L^2(U)$. Since \eqref{eq_vAu} needs to be continuous, by setting $u_1 = 0$ it follows that also the first term
	\begin{align*}
		\left[ u_2 \mapsto \langle v_1, u_2 \rangle_{H^{-2}(U) \times H^2_0(U)} \right] \colon H^2_0(U) \to \C
	\end{align*}
	can be extended continuously to $L^2(U),$ which means $v_1 \in L^2(U).$ \\
	We have shown that $v \in D(A')$ implies $v_2 \in H^2_0(U)$ and $v_1 \in L^2(U),$ i.e. $v \in \widetilde D.$ Hence, we obtain
	$\widetilde D = D(A')$ and therefore $\widetilde A = A'.$
\end{proof}

In the following, $$\Sigma_{\phi} = \{ z \in \C \sm \{ 0 \} : \vert \operatorname{arg}(z) \vert < \phi \}$$ denotes the open sector in $\C.$

\begin{theorem} \label{thm_elliptic_regularity}
There exists a constant $C_0> 0$ such that for
	any $\lambda \in \rho(A) \supset \overline{\Sigma_{\pi/2}} \sm \{0 \}$ and any $F\in H_0^2(U)\times L^2(U)$ the unique solution
	$u = (u_1, v_1) \in D(A)$ of
	\begin{equation}\label{3-1}
		(\lambda - A)u = F \in H^2_0(U) \times L^2(U)
	\end{equation}
	satisfies the estimate
	\begin{equation}
		\Vert u \Vert_{H^{2+\theta}(U) \times H^\theta(U)} \leq C_0 \Vert F \Vert_{H^\theta(U) \times H^{-2+\theta}(U)} \quad (\theta\in [0,2]).	\label{3-2}
	\end{equation}
In particular, for  $u = (u_1, v_1) \in D(A)$ solving
	\[
		(\lambda - A)u = \begin{pmatrix} 0 \\ f \end{pmatrix}
	\]
	with $f \in L^2(U)$ we obtain  the estimate
	\begin{equation}
		\Vert u_1 \Vert_{H^{2+\theta}(U)} 	 \leq C_0 \Vert f \Vert_{H^{-2+\theta}(U)} \quad (\theta\in[0,2]).			\label{3-3}
	\end{equation}
\end{theorem}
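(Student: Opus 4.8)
The plan is to combine the known analyticity and exponential stability of the semigroup generated by $A$ with Amann's interpolation--extrapolation machinery, using the adjoint operator computed in Lemma~\ref{3.2} to reach the negative end of the scale. Write $E_0:=H_0^2(U)\times L^2(U)$ and let $E_1:=D(A)$ carry the graph norm; by elliptic regularity for the clamped bi-Laplacian, $E_1$ equals, with equivalent norm, $(H^4(U)\cap H_0^2(U))\times H_0^2(U)$. Since $A$ generates an analytic, exponentially stable $C_0$-semigroup on $E_0$ (see \cite{Chen-Triggiani89}), we have $\rho(A)\supset\overline{\Sigma_{\pi/2}}\sm\{0\}$ together with the uniform bound
\[ \Vert(\lambda-A)^{-1}\Vert_{\mathcal L(E_0)}\le \frac{C}{1+\vert\lambda\vert}\qquad (\lambda\in\overline{\Sigma_{\pi/2}}\sm\{0\}), \]
the estimate for large $\vert\lambda\vert$ coming from analyticity and the one near the origin from $0\in\rho(A)$ together with exponential stability. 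From this, the endpoint $\theta=2$ is immediate: if $u=(\lambda-A)^{-1}F$, then $Au=\lambda u-F$, hence $\Vert u\Vert_{E_1}\le(1+\vert\lambda\vert)\Vert u\Vert_{E_0}+\Vert F\Vert_{E_0}\le C\Vert F\Vert_{E_0}$, which together with $E_1\hookrightarrow H^4(U)\times H^2(U)$ and $E_0\hookrightarrow H^2(U)\times L^2(U)$ gives \eqref{3-2} for $\theta=2$ (recall $H^{-2+2}=L^2$).

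For the endpoint $\theta=0$ I would pass to the extrapolation space. By Lemma~\ref{3.2}, $A'$ acts in $E_0'=H^{-2}(U)\times L^2(U)$ with $D(A')=L^2(U)\times H_0^2(U)$; since $E_0$ is a Hilbert space, $A'$ generates the dual semigroup, which has the same analyticity and exponential stability, and hence the same uniform sectorial resolvent bound on $E_0'$. By Amann's theory \cite{Am95}, $A$ thus extends to a generator $A_{-1}$ of an analytic, exponentially stable semigroup on the extrapolation space $E_{-1}=\big(D(A')\big)'=L^2(U)\times H^{-2}(U)$, with the same uniform sectorial bounds, and $(\lambda-A_{-1})^{-1}$ restricts to $(\lambda-A)^{-1}$ on $E_0$. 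The computation of the previous paragraph, applied to $A_{-1}$ in place of $A$, gives $\Vert(\lambda-A_{-1})^{-1}\Vert_{\mathcal L(E_{-1},E_0)}\le C$ uniformly, hence $\Vert u\Vert_{E_0}\le C\Vert F\Vert_{E_{-1}}=C\Vert F\Vert_{L^2(U)\times H^{-2}(U)}$ for $F\in E_0$, which is \eqref{3-2} at $\theta=0$.

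For $\theta\in(0,2)$ I would interpolate. Let $(E_\alpha)_{\alpha\in\R}$ be the interpolation--extrapolation scale of $(E_0,A)$ with respect to the complex interpolation functor, so that $[E_{-1},E_0]_\eta=E_{\eta-1}$ and $[E_0,E_1]_\eta=E_\eta$ for $\eta\in(0,1)$, and set $T_\lambda:=(\lambda-A_{-1})^{-1}$. Since $T_\lambda\colon E_{-1}\to E_0$ and $T_\lambda\colon E_0\to E_1$ are bounded uniformly in $\lambda$, complex interpolation yields $\Vert T_\lambda\Vert_{\mathcal L(E_{\eta-1},E_\eta)}\le C_0$ for all $\eta\in[0,1]$ and all $\lambda\in\overline{\Sigma_{\pi/2}}\sm\{0\}$. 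Identifying the scale spaces with the corresponding Sobolev spaces and taking $\eta=\theta/2$, one has $E_{\theta/2}\hookrightarrow H^{2+\theta}(U)\times H^\theta(U)$ and $H^\theta(U)\times H^{\theta-2}(U)\hookrightarrow E_{\theta/2-1}$ --- here the homogeneous boundary conditions built into the scale cause no loss because $F\in E_0$ already vanishes to second order on $\partial U$ --- and \eqref{3-2} follows. Finally, \eqref{3-3} is the special case $F=(0,f)^\top\in E_0$, for which $\Vert F\Vert_{H^\theta(U)\times H^{\theta-2}(U)}=\Vert f\Vert_{H^{-2+\theta}(U)}$ while the left-hand side of \eqref{3-2} dominates $\Vert u_1\Vert_{H^{2+\theta}(U)}$.

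The main obstacle I anticipate is keeping the constant $C_0$ genuinely uniform in $\lambda$ over the entire closed sector $\overline{\Sigma_{\pi/2}}\sm\{0\}$: this forces one to match the analytic regime (large $\vert\lambda\vert$) with the exponentially stable / $0\in\rho(A)$ regime (bounded $\lambda$), and, more delicately, to know that these uniform sectorial bounds persist under both the extrapolation step to $E_{-1}$ and the complex-interpolation step. This is precisely what Amann's interpolation--extrapolation scale delivers, with Lemma~\ref{3.2} supplying the concrete identification $E_{-1}=L^2(U)\times H^{-2}(U)$.
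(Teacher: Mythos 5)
Your proposal is correct and follows essentially the same route as the paper: the $\theta=2$ endpoint from the uniform sectorial resolvent bound of the analytic, exponentially stable semigroup, the $\theta=0$ endpoint via Amann's extrapolation to $E_{-1}=(D(A'))'=L^2(U)\times H^{-2}(U)$ using Lemma~\ref{3.2}, and interpolation between the two. The only cosmetic differences are that the paper uses real rather than complex interpolation and interpolates the two concrete Sobolev-space inequalities directly instead of passing through the intermediate scale spaces $E_{\theta/2}$.
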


\begin{proof}
	By \cite{Chen-Triggiani89}, Proposition~3.1, $A$ is the generator of an analytic, exponentially stable, strongly continuous semigroup on $H^2_0(U) \times L^2(U).$
	Therefore, \eqref{3-1} is uniquely solvable, and we have the uniform resolvent estimate
\begin{equation}
  \label{3-4}
  \|u\|_{H^4(U)\times H^2(U)} \le C_1 \| F\|_{H^2(U)\times L^2(U)}
\end{equation}
with some constant $C_1>0$ independent of $F$ and $\lambda$.

	Let $A^\sharp := A'$ be the adjoint operator of $A$ and set
	\begin{align*}
		 E_0			& := H^2_0(U) \times L^2(U), \\
		 E_1			& := D(A) = \left(H^4(U) \cap H^2_0(U) \right) \times H^2_0(U), \\
		 E^\sharp_0	& := E_0' = H^{-2}(U) \times L^2(U), \\
		 E_1^\sharp		& := D(A^\sharp).
	\end{align*}
	Obviously, $E_0$ is reflexive and $E_1$ is dense in $E_0.$
	Since $A$ is the generator of an analytic $C_0$-semigroup on $E_0$ with domain $E_1$, in symbols $A \in \HH(E_1, E_0),$
	by \cite{Am95}, p. 13, Proposition 1.2.3,
	the same holds true for $A^\sharp$ on $E_0^\sharp$ with domain $E_1^\sharp,$ i.e. $A^\sharp \in \HH(E_1^\sharp, E_0^\sharp).$ \\
	Hence, we can define the interpolation-extrapolation
	scales $\{ (A_\alpha, E_\alpha) : \alpha \in \R \}$ and its dual scale $\{ (A^\sharp_\alpha, E^\sharp_\alpha) : \alpha \in \R \}.$ Then,
	Theorem 1.5.12 in \cite{Am95}
	 states that $E_\alpha$ is reflexive and we have
	\[
		(E_\alpha)' = E^\sharp_{-\alpha} \, \text{ and } \, (A_\alpha)' = A^\sharp_{-\alpha}
	\]
	for all $\alpha \in \R.$ Moreover, by \cite{Am88}, Theorem 6.1 and \cite{Am95}, Theorem 2.1.3 it holds that $A_\alpha$ and $A_\alpha^\sharp$ are
	generators of analytic $C_0$-semigroups in $E_\alpha$ with domain $E_{\alpha+1}$ and $E_\alpha^\sharp$ with domain $E_{\alpha+1}^\sharp$ for all $\alpha \in \R,$
	respectively. Again, we write $A_\alpha \in \HH(E_{\alpha+1}, E_\alpha)$ and $A_\alpha^\sharp \in \HH(E_{\alpha+1}^\sharp, E_\alpha^\sharp).$ \\
	In particular, $A_{-1}$ is the generator of an analytic $C_0$-semigroup on $E_{-1}$ with domain $E_0.$ By
	\cite{Am95}, Theorem 2.1.3,   $\lambda - A_{-1}$ is an isomorphism from $E_0$ to $E_{-1}$ and we have
	\begin{align*}
		\Vert (\mu - A_{-1})^{-1} \Vert_{L(E_{-1}, E_0)} \leq C \Vert (\mu - A)^{-1} \Vert_{L(E_0, E_1)} \leq C'
	\end{align*}
	for all $\mu \in \rho(A)$ with a constant $C'$ independent of $\mu.$ By Lemma \ref{3.2}, the space $E_{-1}$ equals
	\begin{align*}
		E_{-1} = \left( E_{-1} \right)'' = \left (E_1^\sharp \right)' = \left( D(A') \right)' = \left( L^2(U) \times H^2_0(U) \right)' = L^2(U) \times H^{-2}(U).
	\end{align*}
Therefore, there exists a constant $C_2>0$ such that
\begin{equation}
  \label{3-5}
  \|u\|_{H^2(U)\times L^2(U)} \le C_2 \|F\|_{L^2(U)\times H^{-2}(U)}.
\end{equation}
Now the inequality \eqref{3-2} follows by (real) interpolation between \eqref{3-4} and \eqref{3-5} with $C_0:= \max\{C_1,C_2\}$.

Considering the particular case $F=\binom{0}{f}$ and only the first component of $u$, we obtain \eqref{3-3}.
\end{proof}

\section{Exponential stability}\label{subsec:2_strong_exp_stability}

In this section, we continue the analysis of the coupled system \eqref{2_eq_tm_damped}--\eqref{2_eq_tm_undamped}. We will estimates the resolvent  $(\AA - \mathrm{i}\lambda)^{-1}$ of the corresponding first-order system  on the imaginary axis for $\lambda\in\R$ with $|\lambda|$ large. By a result due to Pr\"uss (\cite{Pr84}, Corollary 4), uniform boundedness of the resolvent on the imaginary axis implies exponential stability of the semigrroup.

We start with some identities which will be useful for our estimates. In the following, we will shortly write $x$ for the identity function $x\mapsto x$. For vectors $y,z\in\C^n$ we set $y\cdot z := \sum_{j=1}^n y_j z_j$ (note that this is not the scalar product in $\C^n$).

\begin{lemma}
  \label{4.1}
  Let $U\subset\R^n$ be a $C^4$-domain,  let $w\in H^4(U)$, and let $\nu\colon \partial U\to\R^n$ be the outer unit normal vector. Then,
  \begin{align*}
    2 \Re \int_U (x\cdot \nabla \bar w) \Delta^2w dx & = (4-n) \|\Delta w\|^2_{L^2(U)} + \int_{\partial U} (x\cdot \nu) |\Delta w|^2 dS  \\
    & + 2\Re \int_{\partial U} \Big[ (x\cdot \nabla \bar w)\partial_\nu\Delta w - \Delta w \partial_\nu(x\cdot \nabla\bar w)\Big] dS.
  \end{align*}
\end{lemma}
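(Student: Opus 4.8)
The plan is to establish this Rellich--Pohozaev type identity by integrating by parts twice, so as to move the bi-Laplacian off $w$, and then to evaluate the remaining volume integral by means of a pointwise identity for $\Delta(x\cdot\nabla\bar w)$.

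First I would apply Green's (bilinear, not sesquilinear) formula twice to $\int_U (x\cdot\nabla\bar w)\,\Delta^2 w\,dx$. Writing $\Delta^2 w=\Delta(\Delta w)$, one integration by parts gives $\int_{\partial U}(x\cdot\nabla\bar w)\,\partial_\nu\Delta w\,dS-\int_U\nabla(x\cdot\nabla\bar w)\cdot\nabla\Delta w\,dx$, and a second one rewrites the resulting volume integral as $-\int_{\partial U}\Delta w\,\partial_\nu(x\cdot\nabla\bar w)\,dS+\int_U\Delta w\,\Delta(x\cdot\nabla\bar w)\,dx$. Since $w\in H^4(U)$ and $U$ is a bounded $C^4$-domain, all traces appearing here ($\nabla\bar w$, $\Delta w$, $\partial_\nu\Delta w$ and $\partial_\nu(x\cdot\nabla\bar w)$) are well defined in the appropriate Sobolev spaces on $\partial U$, so these applications of Green's formula are legitimate; if one prefers, one may first prove the identity for $w\in C^4(\bar U)$ and then pass to the limit, using the density of $C^4(\bar U)$ in $H^4(U)$.

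The algebraic heart of the proof is the pointwise identity $\Delta(x\cdot\nabla\bar w)=2\Delta\bar w+x\cdot\nabla(\Delta\bar w)$. This follows by a short direct computation: for each $j$ one has $\partial_k^2(x_j\partial_j\bar w)=2\delta_{kj}\partial_k\partial_j\bar w+x_j\partial_k^2\partial_j\bar w$, and summing over $k$ and $j$ gives the claim. Substituting it into the volume term from the previous step produces $2\|\Delta w\|_{L^2(U)}^2+\int_U\Delta w\,(x\cdot\nabla\Delta\bar w)\,dx$. For this last integral I take real parts and use $2\Re\big[(\Delta w)(x\cdot\nabla\overline{\Delta w})\big]=x\cdot\nabla(|\Delta w|^2)$ together with the divergence theorem and $\div x=n$, which yields $\Re\int_U\Delta w\,(x\cdot\nabla\Delta\bar w)\,dx=\frac12\int_{\partial U}(x\cdot\nu)\,|\Delta w|^2\,dS-\frac n2\|\Delta w\|_{L^2(U)}^2$.

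Putting the pieces together and multiplying by $2$ gives exactly the asserted identity: the two boundary terms coming from the double integration by parts form the final line, the term carrying $x\cdot\nu$ gives the middle boundary integral, and the two bulk contributions $2\cdot2\|\Delta w\|_{L^2(U)}^2$ and $-\frac n2\cdot2\|\Delta w\|_{L^2(U)}^2$ combine to $(4-n)\|\Delta w\|_{L^2(U)}^2$. There is no genuine obstacle beyond careful bookkeeping of the boundary contributions; the only point that deserves an explicit word is the justification of the integrations by parts, which is guaranteed by the $H^4$-regularity of $w$ and the $C^4$-regularity of $U$.
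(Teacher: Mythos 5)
Your proof is correct and is essentially the paper's argument in unpacked form: your two integrations by parts plus the divergence theorem applied to $|\Delta w|^2\,x$ amount to a single application of the divergence theorem to the vector field $V=|\Delta w|^2\,x+2(x\cdot\nabla\bar w)\nabla\Delta w-2\Delta w\,\nabla(x\cdot\nabla\bar w)$ used in the paper, and both computations hinge on the same pointwise identity $\Delta(x\cdot\nabla\bar w)=2\Delta\bar w+x\cdot\nabla\Delta\bar w$. There are no gaps; the bookkeeping of the boundary terms and the density/regularity justification are fine.
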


\begin{proof}
  This follows by straightforward calculation from the divergence theorem, applied to the vector field
  \[ V:= |\Delta w|^2 \, x + 2( x\cdot \nabla \bar w) \nabla\Delta w - 2\Delta w\nabla(x\cdot \nabla \bar w).\]
  Note that
  \[ \div V = n|\Delta w|^2 + x\cdot \nabla|\Delta w|^2 + 2(x\cdot\nabla \bar w)\Delta^2w - 2\Delta w \Delta(x\cdot\nabla\bar w)\]
  and $\Re(\div V) = 2\Re(x\cdot \nabla \bar w)\Delta ^2w+(n-4)|\Delta w|^2$. A more general variant of the statement is also known as Rellich's identity, see, e.g., \cite{mitidieri93}, Proposition~2.2, or \cite{Man13}, p.~238.
\end{proof}

\begin{lemma}
  \label{4.2}
  Let $U\subset\R^n$ be a $C^4$-domain, and let $w\in H^4(U)$ be a solution of $-\Delta^2 w+ \lambda^2w=z$ with $\lambda\in\R$ and $z\in L^2(U)$. Then we have
  \begin{align*}
    n\lambda^2\|w\|_{L^2(U)}^2 & + (4-n)\|\Delta w\|_{L^2(U)}^2 + \int_{\partial U} (x\cdot\nu) |\Delta w|^2 dS \\
     = & - 2\Re\int_U (x\cdot\nabla \bar w)z dx + \lambda^2 \int_{\partial U}(x\cdot\nu)|w|^2 dS \\
    & - 2 \Re \int_{\partial U}\Big[ (x\cdot\nabla\bar w)\partial_\nu\Delta w - \Delta w \partial_\nu(x\cdot\nabla\bar w)\Big] dS.
  \end{align*}
\end{lemma}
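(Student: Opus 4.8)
The plan is to derive the identity directly from Lemma~\ref{4.1} by using the differential equation $-\Delta^2 w + \lambda^2 w = z$ to rewrite $\Delta^2 w = \lambda^2 w - z$ inside the volume integral on the left-hand side of that lemma. First I would substitute this into $2\Re\int_U (x\cdot\nabla\bar w)\Delta^2 w\,dx$, which splits it into a term $2\lambda^2 \Re\int_U (x\cdot\nabla\bar w) w\,dx$ and the term $-2\Re\int_U (x\cdot\nabla\bar w) z\,dx$ that already appears in the claimed formula. So the real work is to show that the first piece equals a boundary term plus a multiple of $\|w\|_{L^2(U)}^2$.

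For that, observe that $2\Re\big[(x\cdot\nabla\bar w) w\big] = x\cdot\nabla|w|^2$ pointwise, so $2\Re\int_U (x\cdot\nabla\bar w) w\,dx = \int_U x\cdot\nabla|w|^2\,dx$. Applying the divergence theorem to the vector field $|w|^2\,x$, whose divergence is $n|w|^2 + x\cdot\nabla|w|^2$, gives $\int_U x\cdot\nabla|w|^2\,dx = \int_{\partial U}(x\cdot\nu)|w|^2\,dS - n\int_U |w|^2\,dx$. Hence $2\lambda^2\Re\int_U(x\cdot\nabla\bar w)w\,dx = \lambda^2\int_{\partial U}(x\cdot\nu)|w|^2\,dS - n\lambda^2\|w\|_{L^2(U)}^2$. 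Plugging this, together with the $z$-term, back into the identity of Lemma~\ref{4.1}, and moving the $-n\lambda^2\|w\|^2$ term to the left-hand side, yields exactly the asserted equation after collecting the $(4-n)\|\Delta w\|^2$ and the two boundary integrals.

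There is essentially no obstacle here; the only point requiring a little care is the regularity needed for the two applications of the divergence theorem. Lemma~\ref{4.1} already presupposes $w\in H^4(U)$ on a $C^4$-domain, which is more than enough to justify $\int_U x\cdot\nabla|w|^2\,dx = \int_{\partial U}(x\cdot\nu)|w|^2\,dS - n\int_U|w|^2\,dx$ (here $w\in H^1(U)$ with $|w|^2\in W^{1,1}(U)$ suffices, and the trace is well defined). So the proof is just the substitution $\Delta^2 w = \lambda^2 w - z$ followed by the elementary identity $2\Re[(x\cdot\nabla\bar w)w] = x\cdot\nabla|w|^2$ and one further application of the divergence theorem, with all boundary terms from Lemma~\ref{4.1} carried along unchanged.
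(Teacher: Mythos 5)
Your proposal is correct and follows the same route as the paper: apply the divergence theorem to the vector field $|w|^2 x$ to obtain $2\Re\int_U (x\cdot\nabla\bar w)w\,dx = -n\|w\|^2_{L^2(U)} + \int_{\partial U}(x\cdot\nu)|w|^2\,dS$, substitute $\Delta^2 w = \lambda^2 w - z$, and plug the result into Lemma~\ref{4.1}. No gaps.
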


\begin{proof}
  Applying the divergence theorem to the vector field $|w|^2x$ and taking the real part, we obtain
  \[ 2 \Re\int_U (x\cdot\nabla\bar w)w\,dx = -n\|w\|^2_{L^2(U)} + \int_{\partial U} (x\cdot\nu) |w|^2 dS. \]
  From this and $\Delta^2 w = \lambda^2 w -z$ we get
  \begin{align*}
    2\Re\int_U (x\cdot\nabla \bar w)\Delta^2 w\,dx & = -2\Re\int_U (x\cdot\nabla\bar w)z\,dx -n\lambda^2\|w\|_{L^2(U)}^2 \\
    & \quad +\lambda^2\int_{\partial U} (x\cdot\nu) |w|^2 dS.
  \end{align*}
  Plugging this into the statement of Lemma~\ref{4.1}, the assertion follows.
\end{proof}

The following result can be found, e.g., in \cite{Man13}, Proof of Theorem~2.2.

\begin{lemma}
  \label{4.3}
  Let $U\subset\R^n$ be a $C^3$-domain, and let $S\subset\partial U$ be a nontrivial part of the boundary. Then, for every $w\in H^3(U)$ with $w=\partial_\nu w =0$ on $S$ we have $\partial_\nu(x\cdot\nabla w) = (x\cdot \nu) \Delta w$ on $S$.
\end{lemma}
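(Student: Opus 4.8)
The plan is to work with the auxiliary function $v := x\cdot\nabla w = \sum_{j=1}^n x_j\,\partial_j w$ and to show that, on $S$, the whole of $\partial_\nu v$ comes from its second-order part, which collapses to $(x\cdot\nu)\,\Delta w$ precisely because $w$ vanishes to first order along $S$. Since $w\in H^3(U)$ and $x$ is smooth on $\bar U$, we have $v\in H^2(U)$, so $\partial_k v = \partial_k w + \sum_j x_j\,\partial_j\partial_k w\in H^1(U)$ and $\partial_\nu v = \sum_k\nu_k\,\partial_k v$ is a well-defined trace in $H^{1/2}(\partial U)$; likewise $\nabla w$ has a trace in $H^{3/2}(\partial U)^n$ and the Hessian $H_w:=(\partial_j\partial_k w)_{j,k=1}^n$ has a trace in $H^{1/2}(\partial U)^{n\times n}$.

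The first step is to upgrade the two hypotheses $w=\partial_\nu w=0$ on $S$ to the single statement $\nabla w=0$ on $S$. Decomposing $\nabla w=(\partial_\nu w)\,\nu+\nabla_{\mathrm{tan}}w$ along $S$, the normal part vanishes by assumption, and the tangential part vanishes because the trace of $w$ is identically zero on $S$, so all its tangential derivatives vanish there. At $H^3$-regularity this is made rigorous by flattening $S$ in a local $C^3$ boundary chart: there $w$ becomes an $H^3$ function on a half-space whose trace on the flat boundary piece is zero, and the traces of its tangential first derivatives are the tangential derivatives of that (vanishing) trace.

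The second step is to determine $H_w$ on $S$. Since $\nabla w\equiv 0$ on $S$, each function $\partial_k w$ has vanishing trace on $S$, hence vanishing tangential derivatives there; commuting tangential differentiation with the trace gives $H_w\,\tau=0$ on $S$ for every vector $\tau$ tangent to $S$. As $H_w$ is symmetric and $\{\tau:\tau\cdot\nu=0\}$ is a hyperplane, this forces $H_w=c\,\nu\otimes\nu$ on $S$ for a scalar $c$; taking the trace gives $c=\Delta w$, i.e. $H_w=(\Delta w)\,\nu\otimes\nu$ on $S$. (In the flattening chart this is just the statement that only the pure normal--normal second derivative survives, the mixed and pure tangential ones being tangential derivatives of vanishing traces.)

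Combining the two steps, on $S$ we obtain
\[
\partial_\nu v=\sum_k\nu_k\,\partial_k v=\underbrace{\nu\cdot\nabla w}_{=0}+\sum_{j,k}x_j\nu_k\,(H_w)_{jk}=\Delta w\sum_{j,k}x_j\nu_k\nu_j\nu_k=(x\cdot\nu)\,\Delta w,
\]
using $|\nu|=1$ in the last equality. The one delicate point is the trace bookkeeping --- justifying ``$\nabla w=0$ on $S$'' and ``$H_w=(\Delta w)\,\nu\otimes\nu$ on $S$'' for $w\in H^3(U)$ rather than for smooth $w$ --- but this is routine once one passes to a local chart flattening $S$ (or approximates $w$ suitably while keeping the relevant traces under control); the algebra itself is elementary, consistent with this being a classical Rellich-type boundary identity (cf. \cite{Man13}).
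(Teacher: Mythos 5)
Your proposal is correct. Note that the paper itself does not prove Lemma~\ref{4.3}: it only points to the proof of Theorem~2.2 in \cite{Man13}, so there is no in-paper argument to compare against; your write-up supplies a self-contained proof that the authors omit. The two key reductions are both sound: (1) $w=\partial_\nu w=0$ on $S$ upgrades to $\nabla w=0$ on $S$ because the tangential gradient of a vanishing trace vanishes; (2) applying the same principle to each $\partial_k w\in H^2(U)$ gives $H_w\tau=0$ on $S$ for tangential $\tau$, and symmetry of the Hessian then forces $H_w=(\Delta w)\,\nu\otimes\nu$ there (the kernel contains $\nu^{\perp}$, so the range lies in $\mathrm{span}(\nu)$, and the scalar is recovered from the trace). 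The final computation $\partial_\nu(x\cdot\nabla w)=\nu\cdot\nabla w+x^{\top}H_w\nu=(x\cdot\nu)\Delta w$ is then immediate. You correctly identify the only delicate point, namely that all of this is an identity between traces: for $w\in H^3(U)$ and a $C^3$ boundary, $\nabla w$ has trace in $H^{3/2}(\partial U)^n$ and $H_w$ in $H^{1/2}(\partial U)^{n\times n}$, tangential differentiation commutes with taking traces in a local flattening chart, and the conclusion holds on the (relative interior of the) portion $S$; this is exactly the level of rigour needed, and the regularity hypotheses of the lemma are used in full.
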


In the next step, we consider the resolvent equation $(-\mathrm{i}\lambda + \AA) U = F$ for a particular right-hand side $F=(0,0,0,g_2)^\top$ with inhomogeneous transmission conditions. More precisely, we consider
\begin{align}
		-\mathrm{i} \lambda u_1 + v_1 								& = 0 \,\, \text{ in } \Omega_1, 	\label{eq_apriori_1}\\
		- \Delta^2 u_1 + \rho \Delta v_1 - \mathrm{i}\lambda v_1 	& = 0 \,\, \text{ in } \Omega_1, 	\label{eq_apriori_2}\\
		-\mathrm{i}\lambda u_2 + v_2 								& = 0 \,\, \text{ in } \Omega_2, 	\label{eq_apriori_3}\\
		- \Delta^2 u_2 - \mathrm{i}\lambda v_2						& = g_2 \text{ in } \Omega_2		\label{eq_apriori_4}
\end{align}
with transmission conditions
\begin{align}
		\left.
		\begin{aligned}
			\Delta u_1 	& = \Delta u_2, \\
			-\mathrm{i}\lambda \rho \partial_\nu u_1 + \partial_\nu \Delta u_1 & = \partial_\nu \Delta u_2 + \mathrm{i}\lambda \rho \partial_\nu w_1
		\end{aligned}
		\right\} \, \text{ on } \Gamma.
		\label{eq_apriori_tm}
\end{align}

The following a priori estimate will be the crucial step for the proof of exponential stability.

\begin{proposition} \label{prop_damped_undamped_apriori}
Let $w_1 \in H^4(\Omega_1)$ and $g_2 \in L^2(\Omega_2)$ be given. Then,
 there exists $\lambda_0 > 0$ and a constant $C > 0$ (only depending on $n, \rho, \delta_0$ and $\lambda_0$)
	such that for any solution $U = (u_1, u_2, v_1, v_2)^\top \in X(\Omega) \times X(\Omega)$ with $u_i \in H^4(\Omega_i)$ for $i = 1, 2$ of \eqref{eq_apriori_1}--\eqref{eq_apriori_tm}
	the estimate
	\begin{align*}
		\Vert U \Vert_\HH \leq C\left( \Vert g_2 \Vert_{L^2(\Omega_2)} + \vert \lambda \vert \Vert \partial_\nu w_1 \Vert_{L^2(\Gamma)} \right)
			 \quad (\lambda \in \R, \vert \lambda \vert > \lambda_0)
	\end{align*}
	holds.
\end{proposition}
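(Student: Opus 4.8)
The plan is to run a classical frequency-domain multiplier argument (Rellich-type identities plus the a priori estimate of Section~3), combining the dissipation available on the damped part $\Omega_1$ with the Rellich identity on the undamped part $\Omega_2$. First, I would record what the dissipation gives directly. Taking the $\HH$-inner product of the system \eqref{eq_apriori_1}--\eqref{eq_apriori_4} with $U$ and taking real parts, the only non-selfadjoint contributions come from the damping term and the inhomogeneous transmission term; this yields an identity of the form $\rho\|\nabla v_1\|_{L^2(\Omega_1)}^2 = -\Re\,(\text{boundary terms on }\Gamma\text{ involving }\lambda\rho\,\partial_\nu w_1)$, hence
\[
 \|\nabla v_1\|_{L^2(\Omega_1)}^2 \le C\,|\lambda|\,\|\partial_\nu w_1\|_{L^2(\Gamma)}\,\|v_1\|_{L^2(\Gamma)} + \dots.
\]
Since $v_1 = \mathrm{i}\lambda u_1$ and $u_1=\partial_\nu u_1=0$ on $\Gamma_1$, Poincaré gives control of $\|v_1\|_{L^2(\Omega_1)}$ and, via trace and the Dirichlet condition on $\Gamma_1$, control of $\|v_1\|_{L^2(\Gamma)}$ by $\|\nabla v_1\|_{L^2(\Omega_1)}$. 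This should close into an estimate $\|v_1\|_{L^2(\Omega_1)}^2 + \|\nabla v_1\|_{L^2(\Omega_1)}^2 \le C\big(|\lambda|^2\|\partial_\nu w_1\|_{L^2(\Gamma)}^2 + \text{(cross terms)}\big)$.

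Next, I would invoke Theorem~\ref{thm_elliptic_regularity} for the damped component. Rewriting \eqref{eq_apriori_1}--\eqref{eq_apriori_2} with $v_1 = \mathrm{i}\lambda u_1$, the pair $(u_1,v_1)$ solves a resolvent equation $(\mathrm{i}\lambda - A)(u_1,v_1)^\top = (0, f)^\top$ in $H_0^2(\Omega_1)\times L^2(\Omega_1)$ — after extending the interface data suitably, since here the transmission conditions on $\Gamma$ are inhomogeneous. The cleanest route is: let $h\in H^4(\Omega_1)$ be a lifting (via the retraction $\ms R$ of \cite{Tr78} used in Lemma~\ref{lem_strong_solution}) of the Neumann-type interface data $\mathrm{i}\lambda\rho\,\partial_\nu w_1$, with $\|h\|_{H^{5/2}(\Omega_1)}\lesssim |\lambda|\,\|\partial_\nu w_1\|_{L^2(\Gamma)}$; then $u_1 - h$ has homogeneous data and Theorem~\ref{thm_elliptic_regularity} (with $\theta=1/2$, say) applied to it gives $\|u_1\|_{H^{2+\theta}(\Omega_1)} \le C\big(\|v_1\|_{H^{-2+\theta}(\Omega_1)} + |\lambda|\,\|\partial_\nu w_1\|_{L^2(\Gamma)}\big)$, and by the dissipation estimate above the right-hand side is controlled. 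In particular one extracts the interface traces $\Delta u_1|_\Gamma$ and $\partial_\nu\Delta u_1|_\Gamma$ in the appropriate fractional Sobolev norms on $\Gamma$, bounded by $\|g_2\|_{L^2(\Omega_2)} + |\lambda|\,\|\partial_\nu w_1\|_{L^2(\Gamma)}$ plus lower-order terms in $\|U\|_\HH$.

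Now I would attack the undamped part $\Omega_2$ with the Rellich multiplier $x\cdot\nabla\bar u_2$. From \eqref{eq_apriori_3}--\eqref{eq_apriori_4}, $u_2$ solves $-\Delta^2 u_2 + \lambda^2 u_2 = z$ with $z = g_2 - (\text{l.o.t.})$, and Lemma~\ref{4.2} produces the key identity: $n\lambda^2\|u_2\|_{L^2(\Omega_2)}^2 + (4-n)\|\Delta u_2\|_{L^2(\Omega_2)}^2 + \int_\Gamma (x\cdot\nu)|\Delta u_2|^2\,dS = -2\Re\int(x\cdot\nabla\bar u_2)z + \lambda^2\int_\Gamma(x\cdot\nu)|u_2|^2 - 2\Re\int_\Gamma[\cdots]$. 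Because $n\le 4$ both energy terms on the left have good sign (for $n=4$ the $\|\Delta u_2\|^2$ term is absent and must be recovered separately, e.g. by multiplying the equation by $\bar u_2$ and integrating by parts — this gives $\|\Delta u_2\|_{L^2(\Omega_2)}^2 = \lambda^2\|u_2\|_{L^2(\Omega_2)}^2 - \Re\langle z,u_2\rangle + \text{boundary on }\Gamma$). All the boundary integrals over $\Gamma$ are products of traces of $u_2, \partial_\nu u_2, \Delta u_2, \partial_\nu\Delta u_2$; using the transmission conditions \eqref{eq_apriori_tm} and the bounds on the $\Omega_1$-side traces from the previous step, together with Lemma~\ref{4.3} to handle $\partial_\nu(x\cdot\nabla u_2)$ on the part of the boundary where traces vanish — here one needs the geometric input that $u_1,\partial_\nu u_1$ match $u_2,\partial_\nu u_2$ — these reduce to $C\big(\|g_2\|_{L^2(\Omega_2)}^2 + \lambda^2\|\partial_\nu w_1\|_{L^2(\Gamma)}^2\big) + \varepsilon\|U\|_\HH^2$.

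Finally, assembling: $\|U\|_\HH^2 \approx \|\Delta u_1\|^2 + \|\Delta u_2\|^2 + \|v_1\|^2 + \|v_2\|^2$, and $\|v_2\|_{L^2(\Omega_2)}^2 = \lambda^2\|u_2\|_{L^2(\Omega_2)}^2$ which the Rellich identity controls, $\|v_1\|^2$ and $\|\Delta u_1\|^2$ are controlled by the Section~3 estimate, $\|\Delta u_2\|^2$ by the Rellich step; all in terms of $\|g_2\|_{L^2(\Omega_2)}^2 + \lambda^2\|\partial_\nu w_1\|_{L^2(\Gamma)}^2$ plus a term $\varepsilon\|U\|_\HH^2 + C_\varepsilon(\text{lower-order } \|U\|\text{-norms})$. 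The lower-order terms (e.g. $\|u_1\|_{L^2}$, $\|u_2\|_{L^2}$ without the $\lambda^2$ weight, or $\|\nabla u_1\|_{L^2(\Omega_1)}$) are absorbed either by taking $|\lambda| > \lambda_0$ large (so that $\lambda^2\|u_2\|^2$ dominates $\|u_2\|^2$) or by a standard compactness/contradiction argument; and the $\varepsilon\|U\|_\HH^2$ term is absorbed into the left side for $\varepsilon$ small. This yields the claimed bound.

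\medskip

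I expect the main obstacle to be the bookkeeping of the interface terms on $\Gamma$: one must show that every boundary integral produced by the two Rellich identities (on $\Omega_1$ one could also apply Lemma~\ref{4.1}, though the route above only uses Lemma~\ref{4.2} on $\Omega_2$) and by the lifting of the inhomogeneous transmission data is either controlled by $\|g_2\|_{L^2(\Omega_2)} + |\lambda|\|\partial_\nu w_1\|_{L^2(\Gamma)}$, or is a genuinely lower-order quantity that can be absorbed. The delicate point is matching the Sobolev regularities: the traces of $\Delta u_1$ on $\Gamma$ live in $H^{1/2}(\Gamma)$ and of $\partial_\nu\Delta u_1$ in $H^{-1/2}(\Gamma)$, so one needs Theorem~\ref{thm_elliptic_regularity} with $\theta$ chosen (around $\theta = 1/2$ or slightly above) so that $u_1\in H^{2+\theta}(\Omega_1)$ has enough trace regularity to pair against the $\Omega_2$-side quantities in the Rellich boundary integrals, while keeping the right-hand side $\|v_1\|_{H^{-2+\theta}}$ controllable — and the restriction $n\le 4$ is exactly what makes the bulk terms in Lemma~\ref{4.2} have the right sign. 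A secondary subtlety is the case $n=4$, where the $\|\Delta w\|^2$ coefficient $(4-n)$ vanishes and the missing coercivity must be supplied by an auxiliary multiplication by $\bar u_2$, as indicated above.
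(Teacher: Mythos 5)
Your overall architecture (dissipation identity for $v_1$, a Rellich multiplier on the undamped part, the Section~3 elliptic estimate, absorption for large $|\lambda|$) matches the paper's, and your first step as well as your $n=4$ remark (multiplying by $\bar u_2$ and integrating by parts to recover $\|\Delta u_2\|^2$, which is precisely the paper's step for estimating $\|\Delta u_1\|^2+\|\Delta u_2\|^2$) are sound. However, there are two genuine gaps, located exactly at the points you yourself flag as delicate.

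First, Theorem~\ref{thm_elliptic_regularity} cannot be applied to $(u_1,v_1)$ on $\Omega_1$ in the way you propose. That theorem concerns the clamped-plate generator $A$ with domain $(H^4\cap H_0^2)\times H_0^2$, i.e.\ $u=\partial_\nu u=0$ on \emph{all} of $\partial\Omega_1=\Gamma_1\cup\Gamma$. On $\Gamma$ your $u_1$ equals $u_2$ and $\partial_\nu u_1=\partial_\nu u_2$, which do not vanish; lifting the \emph{third-order} interface datum $\mathrm{i}\lambda\rho\,\partial_\nu w_1$ does nothing to repair the zeroth- and first-order traces, so $u_1-h\notin H_0^2(\Omega_1)$ and the resolvent estimate simply does not apply to it. The paper's device is to multiply by a cutoff $\chi$ vanishing near $\Gamma$, so that $\chi u_1\in H_0^2(\Omega_1)$ genuinely lies in the domain, and to use Theorem~\ref{thm_elliptic_regularity} with $\theta=1/2$ only to control the outer trace $\|u_1\|_{H^2(\Gamma_1)}$, at the cost of harmless commutator terms of order three.

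Second, and this is the central missing idea: you apply the Rellich identity (Lemma~\ref{4.2}) only on $\Omega_2$ and plan to estimate the resulting interface integrals such as $\int_\Gamma(x\cdot\nabla\bar u_2)\,\partial_\nu\Delta u_2\,dS$ by trace bounds coming from the $\Omega_1$ side. Even granting $u_1\in H^{5/2}(\Omega_1)$ (which, per the previous paragraph, your step does not deliver), the trace $\partial_\nu\Delta u_1|_\Gamma$ lies only in $H^{-1}(\Gamma)$, and its pairing with $x\cdot\nabla\bar u_2|_\Gamma$ is of the same order as $\|U\|_\HH^2$ with no small factor to permit absorption. The paper's proof instead applies Lemma~\ref{4.2} on \emph{both} $\Omega_1$ (with $z=-\rho\Delta v_1$) and $\Omega_2$ (with $z=g_2$) and adds the two identities: because $\nu$ is the inner normal on $\Gamma$ relative to $\Omega_1$, the dangerous interface integrals involving $\partial_\nu\Delta u_j$ and $\partial_\nu(x\cdot\nabla u_j)$ cancel, via the transmission conditions \eqref{4-4}--\eqref{4-6} and Lemma~\ref{4.3}, up to the single explicitly controllable term $2\Re\bigl[\mathrm{i}\lambda\rho\int_\Gamma(x\cdot\nabla u_1)\partial_\nu\overline{(u_1+w_1)}\,dS\bigr]$. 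Without this two-sided cancellation your estimate of $\|v_2\|_{L^2(\Omega_2)}$ does not close.
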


In the following proof, we will use a generic constant $C$ independent of $\lambda, U,$ and $F$. Moreover, an estimate of the form $\|\cdot\| \le \epsilon \|\cdot\|_1 + C_\epsilon\|\cdot\|_2$ has to be understood in the sense that for every small $\epsilon>0$ there exists a constant $C_\epsilon>0$ such that the inequality holds. Again $C_\epsilon$ denotes a generic constant. Note that all constants may depend on $\rho$.

\begin{proof}
We have to estimate
\[ \|U\|_\HH = \Big( \|\Delta u_1\|_{L^2(\Omega_1)}^2 + \|\Delta u_2\|_{L^2(\Omega_2)}^2 + \|v_1\|_{L^2(\Omega_1)}^2 + \|v_2\|_{L^2(\Omega_2)}^2\Big)^{1/2}.\]
The proof is done in several steps.

\medskip

\textbf{(i)} \textit{Estimate of $v_1$.} Let $\lambda \in \R$ with $\vert \lambda \vert \gg 1$ and $U = (u_1, u_2, v_1, v_2)\in X(\Omega) \times X(\Omega)$ be a solution of \eqref{eq_apriori_1}--\eqref{eq_apriori_tm}.
	Hence, $(u_1, u_2)$ is a solution of
	\begin{align}
		-\Delta^2 u_1 + \mathrm{i} \lambda \rho \Delta u_1 + \lambda^2 u_1	& = 0, 												\label{eq_apriori_5} \\
		-\Delta^2 u_2 + \lambda^2 u_2										& = g_2 											\label{eq_apriori_6}
	\end{align}
	in $\Omega_1 \times \Omega_2$ satisfying the transmission conditions \eqref{eq_apriori_tm}. By the definition of $X(\Omega)$, we have  $u_1 = \partial_\nu u_1 = 0$ on $\Gamma_1$.
	In order to show the assertion of the theorem, we need to establish an estimate of the form
	\[
		\Vert U \Vert^2_\HH \leq \epsilon \Vert U \Vert^2_\HH + C_\epsilon
			\left( \Vert g_2 \Vert_{L^2(\Omega_2)}^2 + \vert \lambda \vert^2 \Vert \partial_\nu w_1 \Vert_{L^2(\Gamma)}^2 \right)
	\]
	Similar to the proof of the dissipativity of $\AA$ in Theorem \ref{2_thm_well_posedness_damped_undamped}, we obtain
	\begin{align*}
		\Re \langle F, U \rangle_\HH = \Re \langle \AA U, U \rangle_\HH = - \rho \Vert \nabla v_1 \Vert_{L^2(\Omega_1)}^2
			- \Re \int_\Gamma \mathrm{i}\lambda \rho \overline{v_1} \partial_\nu w_1 \, dS.
	\end{align*}
Therefore, Poincar\'{e} and Young's inequality yield
	\begin{align*}
		\Vert v_1 \Vert_{H^1(\Omega_1)}^2
			& \leq C  \left( \Vert g_2 \Vert_{L^2(\Omega_2)} \Vert U \Vert_\HH
				+ \vert \lambda \vert \Vert \partial_\nu w_1 \Vert_{L^2(\Gamma)} \Vert v_1 \Vert_{H^1(\Omega_1)} \right) 	\\
			& \leq \epsilon  \left( \Vert U \Vert_\HH^2 + \Vert v_1 \Vert_{H^1(\Omega_1)}^2 \right)
				+ C_{\epsilon } \left( \Vert g_2 \Vert_{L^2(\Omega_2)}^2 + \vert \lambda \vert^2 \Vert \partial_\nu w_1 \Vert_{L^2(\Gamma)}^2 \right),
	\end{align*}
	that is
	\begin{align}
		\Vert v_1 \Vert_{H^1(\Omega_1)}^2 \leq \epsilon  \Vert U \Vert_\HH^2
			+ C_{\epsilon } \left( \Vert g_2 \Vert_{L^2(\Omega_2)}^2 + \vert \lambda \vert^2 \Vert \partial_\nu w_1 \Vert_{L^2(\Gamma)}^2 \right).
		\label{eq_apriori_est_v1}
	\end{align}
	Together with \eqref{eq_apriori_1} this implies
	\begin{align}
			\Vert u_1 \Vert_{H^1(\Omega_1)}^2
		\leq \epsilon \vert \lambda \vert^{-2} \Vert U \Vert_\HH^2
			+ C_{\epsilon} \left( \vert \lambda \vert^{-2} \Vert g_2 \Vert_{L^2(\Omega_2)}^2 + \Vert \partial_\nu w_1 \Vert_{L^2(\Gamma)}^2 \right).
		\label{eq_apriori_est_u1}
	\end{align}

\medskip

\textbf{(ii)} \textit{Estimate of $\Delta u_1$ and $\Delta u_2$.}
We multiply \eqref{eq_apriori_5} by $-\overline{u_1}$ and \eqref{eq_apriori_6}
	by $-\overline{u_2}.$ Integration by parts and summing up yields
	\begin{align*}
		\Vert \Delta u_1 \Vert^2_{L^2(\Omega_1)} & + \Vert \Delta u_2 \Vert^2_{L^2(\Omega_2)} + \mathrm{i}\lambda \rho \Vert \nabla u_1 \Vert^2_{L^2(\Omega_2)} \\
			& = \lambda^2 \left( \Vert u_1 \Vert^2_{L^2(\Omega_1)} + \Vert u_2 \Vert^2_{L^2(\Omega_2)} \right)
				- \langle g_2, u_2 \rangle_{L^2(\Omega_2)} + \mathrm{i}\lambda \rho \int_\Gamma \overline{u_1} \partial_\nu w_1 \, dS,
	\end{align*}
	where we have used the transmission conditions \eqref{eq_apriori_tm} and $u_1 = \partial_\nu u_1 = 0$ on $\Gamma_1.$ Taking the real part and observing $v_j=i\lambda u_j$, we see that
	\begin{align}
		\quad \Vert \Delta u_1 \Vert^2_{L^2(\Omega_1)} & + \Vert \Delta u_2 \Vert^2_{L^2(\Omega_2)} \leq \vert \lambda \vert^2 \big( \Vert u_1 \Vert^2_{L^2(\Omega_1)} + \Vert u_2 \Vert^2_{L^2(\Omega_2)} \big) \nonumber\\
				& \quad	+ \Vert g_2 \Vert_{L^2(\Omega_2)} \Vert u_2 \Vert_{L^2(\Omega_2)}
					+ \vert \lambda \vert \rho \Vert u_1 \Vert_{L^2(\Gamma)} \Vert \partial_\nu w_1 \Vert_{L^2(\Gamma)} \nonumber\\
& =  \big( \Vert v_1 \Vert^2_{L^2(\Omega_1)} + \Vert v_2 \Vert^2_{L^2(\Omega_2)} \big) \nonumber\\
& \quad	+ \Vert g_2 \Vert_{L^2(\Omega_2)} \Vert u_2 \Vert_{L^2(\Omega_2)}
					+ \rho \Vert v_1 \Vert_{L^2(\Gamma)} \Vert \partial_\nu w_1 \Vert_{L^2(\Gamma)}.\label{4-1}
\end{align}
Assuming $|\lambda|\ge 1$, we get with the trace theorem and Young's inequality
\begin{align*}
  \|g_2\|_{L^2(\Omega_2)}\|u_2\|_{L^2(\Omega_2)} & \le \tfrac 12 \|g_2\|_{L^2(\Omega_1)}^2 + \tfrac12 \|v_2\|_{L^2(\Omega_2)}^2,\\
  \rho\|v_1\|_{L^2(\Gamma)}\|\partial_\nu w_1\|_{L^2(\Gamma)} & \le \tfrac\rho 2\|v_1\|_{H^1(\Omega_1)}^2 + \tfrac \rho2 \|\partial_\nu w_1\|_{L^2(\Gamma)}^2.
\end{align*}
Inserting this into \eqref{4-1} yields
\begin{align}
  \|\Delta u_1\|_{L^2(\Omega_1)}^2  & + \|\Delta u_2\|_{L^2(\Omega_2)}^2 \le (1+\tfrac\rho 2) \|v_1\|_{H^1(\Omega_1)}^2 + \|v_2\|_{L^2(\Omega_2)}^2 \nonumber\\
  & \qquad + \|g_2\|_{L^2(\Omega_1)}^2 + \tfrac \rho2 \|\partial_\nu w_1\|_{L^2(\Gamma)}^2 \nonumber\\
  & \le \epsilon  \Vert U \Vert_\HH^2 + C  \|v_2\|_{L^2(\Omega_2)}^2 \nonumber\\
  & \qquad + C_{\epsilon} \big( \Vert g_2 \Vert_{L^2(\Omega_2)}^2 + |\lambda|^2\,\Vert \partial_\nu w_1 \Vert_{L^2(\Gamma)}^2 \big).\label{4-7}
\end{align}
Here, in the last step we estimated $\|v_1\|_{H^1(\Omega_1)}$ due to \eqref{eq_apriori_est_v1}.

\medskip

\textbf{(iii)} \textit{Estimate of $v_2$.} We apply Lemma~\ref{4.2} in $U=\Omega_2$ with $w=u_2$ and $z=g_2$ and obtain, noting $i\lambda u_2=v_2$,
\begin{align}
  n\|v_2\|_{L^2(\Omega_2)}^2 & = -(4-n)\|\Delta u_2\|_{L^2(\Omega_2)}^2 - \int_\Gamma (x\cdot \nu)|\Delta u_2|^2 dS \nonumber\\
  & - 2\Re \int_{\Omega_2} (x\cdot\nabla \bar{u_2}) g_2\, dx + \lambda^2 \int_\Gamma (x\cdot\nu) |u_2|^2 dS \nonumber\\
  & - 2\Re\int_\Gamma\Big[ (x\cdot\nabla u_2)\partial_\nu\Delta\bar{u_2} - \Delta u_2 \partial_\nu (x\cdot\nabla\bar{u_2})\Big] dS\label{4-2}.
\end{align}
In the same way, we apply Lemma~\ref{4.2} in $U=\Omega_1$ with $w=u_1$ and $z=-\rho\Delta v_1$. Here we remark that $-\Delta^2u_1+\lambda^2u_1=-\rho\Delta v_1$ by \eqref{eq_apriori_1} and \eqref{eq_apriori_2}. Moreover, the normal vector $\nu$ is the outer normal at the part $\Gamma_1$ of the boundary $\partial\Omega_1$, but $\nu$ is the inner normal at the part $\Gamma$ of $\partial\Omega_1$. We obtain
\begin{align}
  n \|v_1\|_{L^2(\Omega_1)}^2 & = -(4-n)\|\Delta u_1\|_{L^2(\Omega_1)}^2 - \int_{\Gamma_1}(x\cdot\nu)|\Delta u_1|^2\,dx\nonumber\\
  & + \int_{\Gamma} (x\cdot\nu)|\Delta u_1|^2\,dx + 2\Re\int_{\Omega_1} (x\cdot\nabla\bar{u_1})\rho\Delta v_1\,dx\nonumber\\
  & + \lambda^2\int_{\Gamma_1} (x\cdot\nu)|u_1|^2 dS - \lambda^2\int_\Gamma(x\cdot\nu)|u_1|^2dS\nonumber\\
  & -2\Re\int_{\Gamma_1}\Big[ (x\cdot\nabla {u_1})\partial_\nu\Delta\bar{u_1} - \Delta u_1 \partial_\nu(x\cdot\nabla\bar{u_1})\Big]dS\nonumber\\
  & + 2\Re\int_{\Gamma}\Big[ (x\cdot\nabla {u_1})\partial_\nu\Delta\bar{u_1} - \Delta u_1 \partial_\nu(x\cdot\nabla\bar{u_1})\Big]dS .\label{4-3}
\end{align}
Due to the condition $(u_1,u_2)\in X(\Omega)$ and the transmission conditions \eqref{eq_apriori_tm}, we have
\begin{equation}
  \label{4-4}
  u_1=u_2,\; \nabla u_1=\nabla u_2,\; \Delta u_1 =\Delta u_2,\; \partial_\nu\Delta u_2 = \partial_\nu\Delta u_1 -i\lambda\rho\partial_\nu(u_1+w_1)\quad \text{on }\Gamma.
\end{equation}
Let $\tilde u_2\in H^4(\Omega)$ be a regular extension of $u_2$ to $\Omega$, and define $\varphi:=u_1-\tilde u_2|_{\Omega_1} \in H^4(\Omega_1)$. Then $\varphi=\partial_\nu\varphi=0$ on $\Gamma$, and an application of Lemma~\ref{4.3} yields
\[ \partial_\nu(x\cdot\nabla\varphi) = (x\cdot\nu)\Delta\varphi = (x\cdot \nu) (\Delta u_1-\Delta u_2) = 0\quad\text{on }\Gamma\]
which gives
\begin{equation}
  \label{4-5}
  \partial_\nu(x\cdot\nabla u_1) = \partial_\nu(x\cdot\nabla u_2)\quad \text{on }\Gamma.
\end{equation}
Moreover, with Lemma~\ref{4.3} again we get
\begin{equation}
  \label{4-6}
  u_1 = 0,\; \nabla u_1 = 0,\; \partial_\nu(x\cdot\nabla u_1)=(x\cdot\nu)\Delta u_1\quad\text{on }\Gamma_1.
\end{equation}
Adding \eqref{4-2} and \eqref{4-3} and taking into account \eqref{4-4}--\eqref{4-6}, we obtain
\begin{align*}
  n\big(\|v_1\|_{L^2(\Omega_1)}^2 & + \|v_2\|_{L^2(\Omega_2)}^2 \big) = -(4-n)\big(\|\Delta u_1\|_{L^2(\Omega_1)}^2 + \|\Delta u_2\|_{L^2(\Omega_2)}^2\big) \\
  & + 2\Re\Big[ i\lambda\rho\int_{\Omega_1} (x\cdot\nabla\bar{u_1})\Delta u_1\, dx\Big]
  - 2\Re\Big[ \int_{\Omega_2} (x\cdot\nabla\bar{u_2})g_2\,dx\Big]\\
  & + 2\Re\big[ i\lambda\rho\int_{\Gamma} (x\cdot \nabla u_1)\partial_\nu\overline{(u_1+w_1)}dS\Big]+ \int_{\Gamma_1} (x\cdot\nu) |\Delta u_1|^2 dS.
\end{align*}
Therefore,
\begin{align}
  \|v_2\|_{L^2(\Omega_2)}^2 & \le C \Big[ |\lambda|\,\|u_1\|_{H^1(\Omega_1)}\|\Delta u_1\|_{L^2(\Omega_1)} + \|\nabla u_2\|_{L^2(\Omega_2)} \|g_2\|_{L^2(\Omega_2)} \nonumber\\
  & + |\lambda|\,\|u_1\|_{H^1(\Gamma)}^2 + |\lambda|\, \|u_1\|_{H^1(\Gamma)}\|\partial_\nu w_1\|_{L^2(\Gamma)} + \|u_1\|_{H^2(\Gamma_1)}^2\Big]\label{4-8}.
\end{align}
We estimate the first four terms on the right-hand side of \eqref{4-8} while the last term will be treated in part (iv) of this proof.

The first term in \eqref{4-8} can be estimated by \eqref{eq_apriori_est_v1}, $\|\Delta u_1\|_{L^2(\Omega_1)}\le \|U\|_\HH$ and Young's inequality. We obtain
\begin{align}
  |\lambda|\,\|u_1\|_{H^1(\Omega_1)}\|\Delta u_1\|_{L^2(\Omega_1)} & \le \Big( \epsilon\|U\|_\HH + C_\epsilon\big(\|g_2\|_{L^2(\Omega_2)}+ |\lambda|\,\|\partial_\nu w_1\|_{L^2(\Gamma)}\big)\Big)\|U\|_\HH \nonumber\\
  & \le \epsilon \|U\|_\HH^2 + C_\epsilon \big(\|g_2\|_{L^2(\Omega_2)}+ |\lambda|\,\|\partial_\nu w_1\|_{L^2(\Gamma)}\big)^2.\label{4-9}
\end{align}

For the second term in \eqref{4-8}, we apply Green's formula, using $u_1=u_2$ and $\partial_\nu u_1=\partial_\nu u_2$ on $\Gamma$ to see that
\begin{align*}
  \|\nabla u_2\|_{L^2(\Omega_2)}^2 & \le \|u_2\|_{L^2(\Omega_2)}\|\Delta u_2\|_{L^2(\Omega_2)} + \|u_1\|_{L^2(\Gamma)} \|\nabla u_1\|_{L^2(\Gamma)} \\
  & \le \tfrac12 \|u_2\|_{L^2(\Omega_2)}^2 + \tfrac12 \|\Delta u_2\|_{L^2(\Omega_2)}^2 + C \|u_1\|_{H^2(\Omega_1)}^2 \\
  & \le C\big( \|u_1\|_{H^2(\Omega_1)}^2 + \|u_2\|_{H^2(\Omega_2)}^2\big) \le C\|U\|_\HH^2.
\end{align*}
For the last inequality, we have applied Remark~\ref{2.1} b). Therefore, the second term in \eqref{4-8} can be estimated by
\[ \|\nabla u_2\|_{L^2(\Omega_2)} \|g_2\|_{L^2(\Omega_2)} \le \epsilon\|U\|_\HH^2 + C_\epsilon \|g_2\|_{L^2(\Omega_2)}^2.\]

For the third term in \eqref{4-8} we use interpolation to see that
\begin{align}
  |\lambda|\,\|u_1\|_{H^1(\Gamma)}^2 & \le C|\lambda| \,\|u_1\|_{H^{3/2}(\Omega_1)}^2 \le C |\lambda|\,\|u_1\|_{H^1(\Omega_1)} \|u_1\|_{H^2(\Omega_1)} \nonumber\\
  & \le\epsilon \|U\|_\HH^2 + C_\epsilon\big(\|g_2\|_{L^2(\Omega_2)}^2+ |\lambda|^2\,\|\partial_\nu w_1\|_{L^2(\Gamma)}^2\big)\label{4-10}.
\end{align}

Similarly, for the fourth term in \eqref{4-8} we write
\[ |\lambda|\,\|u_1\|_{H^1(\Gamma)}\|\partial_\nu w_1\|_{L^2(\Gamma)} \le \tfrac12 \|u_1\|_{H^1(\Gamma)}^2 + \tfrac12|\lambda|^2 \|\partial_\nu w_1\|_{L^2(\Gamma)}^2.\]
As we have $|\lambda|\ge 1$, this again can be estimated by the right-hand side of \eqref{4-10}. Altogether, we obtain
\begin{equation}
  \label{4-11}
  \|v_2\|_{L^2(\Omega_2)}^2 \le \epsilon \|U\|_\HH^2 + C_\epsilon\big(\|g_2\|_{L^2(\Omega_2)}^2+ |\lambda|^2\,\|\partial_\nu w_1\|_{L^2(\Gamma)}^2\big) + C \|u_1\|_{H^2(\Gamma_1)}^2.
\end{equation}

\medskip

\textbf{(iv)} \textit{Estimate of $u_1$ on $\Gamma_1$.}
The only term still left is $\|u_1\|_{H^2(\Gamma_1)}$.
We introduce a cut-off function
	$\chi \in C^\infty(\overline{\Omega_1}), 0 \leq \chi \leq 1,$ satisfying $\chi = 1$ in a neighbourhood of $\Gamma_1$ and $\chi = 0$ in a neighbourhood of
	the transmission interface $\Gamma.$ Now, set
	\[
		z_1 := \chi u_1, \quad z_2 := \mathrm{i} \lambda z_1, \quad z := (z_1, z_2)^\top.
	\]
	Then, since $u_1$ is a solution of \eqref{eq_apriori_5}, $z$ satisfies
	\begin{align*}
		\left( -\mathrm{i}\lambda + A \right)z = \begin{pmatrix} 0 \\ \widetilde f \end{pmatrix},
	\end{align*}
	where $A$ is defined as in \eqref{3-6} and
	\begin{align*}
		\widetilde f
			& = (-\Delta^2\chi)u_1 - 2(\nabla \Delta \chi)\cdot \nabla u_1 - \Delta\chi \Delta u_1 - 2\Delta(\nabla \chi \cdot \nabla u_1) \\
			& \quad - \Delta \chi \Delta u_1 - 2 \nabla \chi \cdot \nabla \Delta u_1 + \mathrm{i}\lambda \rho \left( (\Delta \chi) u_1 + 2 \nabla\chi \cdot \nabla u_1 \right)
			\in L^2(\Omega_1) \\
& =  B_3(D, \chi)u_1 + \mathrm{i}\lambda \rho \left( (\Delta \chi)u_1 + 2 \nabla \chi \cdot \nabla u_1 \right)
	\end{align*}
	with a $\lambda$-independent differential operator $B_3(D, \chi)$ of order $3$ with coefficients only consisting of derivatives of the $C^\infty$-function $\chi.$ Hence,
	$$B_3(D, \chi) \in L(H^{3/2}(\Omega_1), H^{-3/2}(\Omega_1)).$$	
	From Theorem~\ref{thm_elliptic_regularity} with $\theta=\frac12$, we obtain
	\begin{align*}
		\|u_1\|_{H^2(\Gamma_1)}  & = \|z_1\|_{H^2(\Gamma_1)} \le C \Vert z_1 \Vert_{H^{5/2}(\Omega_1)}
			\leq C \Vert \widetilde f \Vert_{H^{-3/2}(\Omega_1)}  \\
			& \leq C  \left( \Vert B_3(D, \chi) u_1 \Vert_{H^{-3/2}(\Omega_1)} + \vert \lambda \vert \Vert u_1 \Vert_{H^1(\Omega_1)} \right)  \\
			& \leq C  \left( \Vert u_1 \Vert_{H^{3/2}(\Omega_1)} + \vert \lambda \vert \Vert u_1 \Vert_{H^1(\Omega_1)} \right) \\
& \le C \big( |\lambda|^{1/2} \|u_1\|_{H^{3/2}(\Omega_1)} + \|v_1\|_{H^1(\Omega_1)}\big).
	\end{align*}
Now, \eqref{eq_apriori_est_v1} and  \eqref{4-10} yield
\begin{equation}
 \label{4-12}
 \|u_1\|_{H^2(\Gamma_1)}^2 \le  \epsilon \|U\|_\HH^2 + C_\epsilon\big(\|g_2\|_{L^2(\Omega_2)}^2+ |\lambda|^2\,\|\partial_\nu w_1\|_{L^2(\Gamma)}^2\big).
\end{equation}

\medskip

The assertion of the Proposition now follows from
\eqref{eq_apriori_est_v1}, \eqref{4-7}, \eqref{4-11}, and \eqref{4-12}.
\end{proof}

\begin{theorem}\label{thm_exp_stability_structurally_damped_undamped}
There exists a constant $C = C(\rho) > 0$ such that
	\[
		\Vert (-\mathrm{i}\lambda + \AA)^{-1} \Vert_{L(\HH)} \leq C \quad (\lambda \in \R \sm \{ 0 \}, \vert \lambda \vert > \lambda_0)
	\]
	for some $\lambda_0 > 0.$
	Consequently, the $C_0$-semigroup $(\TT(t))_{t\geq 0}$ generated by $\AA$ is exponentially stable, i.e. there exist constants $M > 0$ and $\kappa > 0$ such that
	\[
		\Vert \TT(t)U^0 \Vert_\HH \leq Me^{-\kappa t} \Vert U^0 \Vert_\HH \quad (t \geq 0)
	\]
	holds for all $U^0 \in \HH.$
\end{theorem}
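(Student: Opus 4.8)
The plan is to establish the displayed resolvent estimate and then invoke the characterisation of exponential stability for bounded $C_0$-semigroups on Hilbert spaces due to Pr\"uss (\cite{Pr84}, Corollary~4): a contraction semigroup is exponentially stable if and only if $\mathrm{i}\R\subset\rho(\AA)$ and $\sup_{\lambda\in\R}\|(-\mathrm{i}\lambda+\AA)^{-1}\|_{L(\HH)}<\infty$. The inclusion $\mathrm{i}\R\subset\rho(\AA)$ is already known (Theorem~\ref{2_prop_imaginary_resolvent} together with Remark~\ref{rem_0_resolvent}), so $\lambda\mapsto(-\mathrm{i}\lambda+\AA)^{-1}$ is continuous, hence bounded, on every compact interval $\mathrm{i}[-\lambda_0,\lambda_0]$; therefore it suffices to prove the uniform bound for $|\lambda|>\lambda_0$ with a suitable $\lambda_0$, which is exactly the first assertion of the theorem. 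The exponential estimate for $\TT$ then follows verbatim from Pr\"uss's theorem.

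To prove the uniform bound, fix $\lambda$ with $|\lambda|$ large and $F=(f_1,f_2,g_1,g_2)\in\HH$, and put $U=(u_1,u_2,v_1,v_2)=(-\mathrm{i}\lambda+\AA)^{-1}F\in D(\AA)$; by Lemma~\ref{lem_strong_solution}, $u_j\in H^4(\Omega_j)$ and all transmission conditions hold in the trace sense. The idea is to bring the problem solved by $U$ into the form \eqref{eq_apriori_1}--\eqref{eq_apriori_tm}, so that the estimates of Proposition~\ref{prop_damped_undamped_apriori} become available. First, the data in $\Omega_1$ is removed from the interior: fix a cut-off $\chi_0\in C^\infty(\overline{\Omega_1})$ which equals $1$ near $\Gamma_1$ and $0$ near $\Gamma$. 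Since the damped plate operator on $\Omega_1$ (clamped on all of $\partial\Omega_1$) generates an analytic, exponentially stable semigroup, Theorem~\ref{thm_elliptic_regularity} provides, uniformly in $\lambda$, a solution $(p_1,q_1)\in(H^4(\Omega_1)\cap H^2_0(\Omega_1))\times H^2_0(\Omega_1)$ of the damped resolvent equation with data $(\chi_0 f_1,g_1)$ and $\|(p_1,q_1)\|_{H^4(\Omega_1)\times H^2(\Omega_1)}\le C\|F\|_\HH$. Then $W:=(\chi_0 p_1,0,\chi_0 q_1,0)$ lies in $D(\AA)$ — the weak transmission conditions hold because $\chi_0 p_1$ and $\chi_0 q_1$ vanish near $\Gamma$ — and $(-\mathrm{i}\lambda+\AA)W$ equals $F$ up to a term of $\HH$-norm $\le C\|F\|_\HH$ whose $\Omega_1$-components vanish in a neighbourhood of $\Gamma_1$. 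Replacing $U$ by $U-W$ we may thus assume $g_1=0$ and $\supp f_1$ is contained in a fixed neighbourhood of $\Gamma$ that does not meet the cut-off used in part~(iv) of the proof of Proposition~\ref{prop_damped_undamped_apriori}. Secondly, the interface inhomogeneity is matched by choosing $w_1\in H^2(\Omega_1)$ with $w_1=\partial_\nu w_1=0$ on $\Gamma_1$ and $\partial_\nu w_1|_\Gamma=\tfrac1{\mathrm{i}\lambda}\partial_\nu f_1|_\Gamma$, which turns the homogeneous transmission condition of $U$ into \eqref{eq_apriori_tm} and satisfies $|\lambda|\,\|\partial_\nu w_1\|_{L^2(\Gamma)}\le C\|f_1\|_{H^2(\Omega_1)}\le C\|F\|_\HH$ — precisely the quantity on the right-hand side of Proposition~\ref{prop_damped_undamped_apriori}.

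After these reductions the (renamed) solution satisfies a system that differs from \eqref{eq_apriori_1}--\eqref{eq_apriori_tm} with data $g_2$ and $w_1$ only through the inhomogeneities $f_1$ in the first and $f_2$ in the third equation. One now repeats the argument of Proposition~\ref{prop_damped_undamped_apriori} keeping track of these extra terms. In each step they appear, after an integration by parts, paired under a real part against $\bar u_j$ or $x\cdot\nabla\bar u_j$; using $\mathrm{i}\lambda u_j=v_j-f_j$ in the interior and $u_1|_\Gamma=u_2|_\Gamma$ on $\Gamma$ (together with the trace estimate $\|u_1\|_{L^2(\Gamma)}\le C|\lambda|^{-1}(\|v_1\|_{H^1(\Omega_1)}+\|f_1\|_{H^1(\Omega_1)})$) one trades the factor $|\lambda|$ for $\|v_j\|+\|f_j\|\le\|U\|_\HH+\|F\|_\HH$, so every new contribution is of the form $\epsilon\|U\|_\HH^2+C_\epsilon\|F\|_\HH^2$. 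Because the reduced $\Omega_1$-data is supported near $\Gamma$, part~(iv) of that proof is unchanged. Summing the estimates of parts (i)--(iv) gives $\|U\|_\HH^2\le\epsilon\|U\|_\HH^2+C_\epsilon\big(\|g_2\|_{L^2(\Omega_2)}^2+|\lambda|^2\|\partial_\nu w_1\|_{L^2(\Gamma)}^2+\|F\|_\HH^2\big)\le\epsilon\|U\|_\HH^2+C_\epsilon\|F\|_\HH^2$, and absorbing the $\epsilon$-term yields $\|U\|_\HH\le C\|F\|_\HH$.

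The main obstacle is the reduction. Writing the resolvent equation naively as an elliptic transmission problem for $(u_1,u_2)$ via $v_j=\mathrm{i}\lambda u_j+f_j$ produces a forcing term $\mathrm{i}\lambda f_2$ in the \emph{undamped} subdomain $\Omega_2$, where no $\lambda$-uniform resolvent estimate is available; hence this term cannot simply be absorbed into an enlarged $g_2$ and passed to Proposition~\ref{prop_damped_undamped_apriori}. The point is to treat the two subdomains asymmetrically: the $\Omega_1$-data is genuinely removable with $\lambda$-uniform bounds (by the parabolic estimates of Section~3), whereas the $\Omega_2$-data must stay coupled, and one must verify that every $|\lambda|$-weighted term it generates occurs in a position — under a real part, paired with an already controlled quantity — where $\mathrm{i}\lambda u_j=v_j-f_j$ removes the factor $|\lambda|$; the boundary contributions on $\Gamma$ arising from the integrations by parts moreover rely on exactly the interface trace bounds that are built into the proof of Proposition~\ref{prop_damped_undamped_apriori}.
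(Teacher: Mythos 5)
Your overall framework (Pr\"uss's theorem plus a uniform resolvent bound for large $|\lambda|$, obtained by reducing to Proposition~\ref{prop_damped_undamped_apriori}) is the same as the paper's, but your reduction is genuinely different. The paper exploits that $(f_1,f_2)\in X(\Omega)$ glues to a single function in $H^2_0(\Omega)$: it defines $\widetilde W=(-\mathrm{i}\lambda+A)^{-1}(\chi_1f_1+\chi_2f_2,\ \chi_1g_1+\chi_2g_2)^\top$ with $A$ the structurally damped operator on \emph{all} of $\Omega$ (clamped on $\partial\Omega$), and subtracts its restrictions. Since $W$ solves the damped equation everywhere while $\AA$ is undamped in $\Omega_2$, the difference $U-W$ satisfies \eqref{eq_apriori_1}--\eqref{eq_apriori_tm} \emph{exactly}, with $\widetilde g_2=\rho\Delta z_2$ and interface datum $\partial_\nu w_1$, both bounded by $C\|F\|_\HH$ via Theorem~\ref{thm_elliptic_regularity}; Proposition~\ref{prop_damped_undamped_apriori} is then quoted as a black box. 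You instead remove only the $\Omega_1$-data near $\Gamma_1$ and correct the interface term, which leaves inhomogeneities in three of the four equations and therefore forces you to re-run the entire multiplier argument of Proposition~\ref{prop_damped_undamped_apriori} with extra terms. Your diagnosis of why the $\Omega_2$-data cannot be removed by a parabolic estimate and must be carried through the real-part structure is correct, and your mechanism (trading $\mathrm{i}\lambda u_j$ for $v_j-f_j$, keeping the part~(iv) cut-off disjoint from the support of the residual $\Omega_1$-data) is the right one; the paper's decomposition simply makes all of this unnecessary.

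Two points in your sketched re-proof need repair. First, after subtracting $W=(\chi_0p_1,0,\chi_0q_1,0)$ you cannot ``assume $g_1=0$'': the second equation retains $(1-\chi_0)g_1$ plus the commutators $[\Delta^2,\chi_0]p_1$ and $\rho[\Delta,\chi_0]q_1$. These are bounded by $C\|F\|_\HH$ and supported away from $\Gamma_1$, so they are harmless in steps (i)--(iii) and invisible in step (iv), but they must be tracked (they enter the Rellich identity in $\Omega_1$ through $z$). Second, and more seriously, the term $2\Re\int_{\Omega_2}(x\cdot\nabla\bar u_2)\,\mathrm{i}\lambda f_2\,dx$ arising in step (iii) cannot be handled by the bare substitution $\mathrm{i}\lambda\nabla\bar u_2=-\nabla\bar v_2+\nabla\bar f_2$, because $\|\nabla v_2\|_{L^2(\Omega_2)}$ is \emph{not} controlled by $\|U\|_\HH$ (only $\|v_2\|_{L^2(\Omega_2)}$ is). You must first integrate by parts to move the derivative onto $f_2\,x$, which produces $\int_{\Omega_2}\mathrm{i}\lambda\bar u_2\,\operatorname{div}(f_2x)\,dx$ (now controllable via $\mathrm{i}\lambda\bar u_2=-(\bar v_2-\bar f_2)$ and $\|f_2\|_{H^1}$) and a boundary term on $\Gamma$, which is controlled only because $v_2|_\Gamma=v_1|_\Gamma$ and $\|v_1\|_{H^1(\Omega_1)}$ is already estimated in step (i). With these repairs, and noting that only $\partial_\nu w_1|_\Gamma$ (not $w_1\in H^4$) actually enters Proposition~\ref{prop_damped_undamped_apriori}, your argument goes through, but at the cost of reproving the proposition in a strictly more general form.
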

\begin{proof}
	Let $\lambda \in \R$ with $\vert \lambda \vert \gg 1$ and let $F = (f_1, f_2, g_1, g_2) \in \HH.$
	Furthermore, let $U = (u_1, u_2, v_1, v_2) \in D(\AA)$ be the unique solution
	of
	\[
		(-\mathrm{i}\lambda + \AA)U = F,
	\]
	i.e. $U$ satisfies
	\begin{align*}
		-\mathrm{i} \lambda u_1 + v_1 								& = f_1 \,\, \text{ in } \Omega_1, 	\\
		- \Delta^2 u_1 + \rho \Delta v_1 - \mathrm{i}\lambda v_1 	& = f_2 \,\, \text{ in } \Omega_1, 	\\
		-\mathrm{i}\lambda u_2 + v_2 								& = g_1 \,\, \text{ in } \Omega_2, 	\\
		- \Delta^2 u_2 - \mathrm{i}\lambda v_2						& = g_2 \, \, \text{ in } \Omega_2	.	
	\end{align*}
	In order to show the assertion, we will subtract the solution $W$ of a structurally damped plate equation with clamped boundary conditions on the whole domain $\Omega$ from $U.$
	For this difference we will be able to use the a-priori estimate from Proposition \ref{prop_damped_undamped_apriori}, whereas for $W$ an appropriate estimate is known. \\
	Recall the definition of the operator $A$ from \eqref{3-6} and define
	\[
		\widetilde W = (w, z) \in D(A) = \left( H^4(\Omega) \cap H^2_0(\Omega) \right) \times H^2_0(\Omega)
	\]
	by
	\begin{align*}
		\widetilde W := (-\mathrm{i}\lambda + A)^{-1} \begin{pmatrix} \chi_1 f_1 + \chi_2 f_2 \\ \chi_1 g_1 + \chi_2 g_2 \end{pmatrix},
	\end{align*}
	where $\chi_i$ is the characteristic function on $\Omega_i$ for $i = 1, 2.$ Since $\chi_1 f_1 + \chi_2 f_2 \in H^2_0(\Omega)$ and
	$\chi_1 g_1 + \chi_2 g_2 \in L^2(\Omega)$ due to the definition of $\HH,$ by Theorem \ref{thm_elliptic_regularity}, $\widetilde W$ is well-defined.
	In the following, we denote the restrictions of the components of $ \widetilde W$ by $w_i := w\vert_{\Omega_i}$ and $z_i := z\vert_{\Omega_i}$ for $i=1,2.$
	Finally, we set
	\begin{align*}
		W := (w_1, w_2, z_1, z_2) \in X(\Omega) \times X(\Omega).
	\end{align*}
	Note that $u_i \in H^4(\Omega_i)$ for $i = 1, 2.$
	With this definitions, we obtain that the difference $U-W$ satisfies
	\begin{align}
		(-\mathrm{i}\lambda + \AA) (U - W)
			& = F - (-\mathrm{i}\lambda + \AA) W
			 = (0,0,0,\widetilde g_2)^\top					 \label{eq_diff_U_W}																								
	\end{align}
	with $\widetilde g_2 = \rho \Delta z_2 \in L^2(\Omega_2),$ subject to the transmission conditions
	\begin{align*}
		\left\{
		\begin{aligned}
			\Delta (u_1 - w_1) & = \Delta (u_2 - w_2), \\
			-\mathrm{i}\lambda \rho \partial_\nu (u_1 - w_1) + \partial_\nu \Delta (u_1 - w_1) & = \partial_\nu \Delta (u_2 - w_2) + \mathrm{i}\lambda \rho \partial_\nu w_1.
		\end{aligned}
		\right.
	\end{align*}
	Thanks to Proposition \ref{prop_damped_undamped_apriori}, we have
\begin{equation}
\label{4-13}
		\Vert U - W\Vert_\HH \leq C\left( \Vert \widetilde g_2 \Vert_{L^2(\Omega_2)} + \vert \lambda \vert \Vert \partial_\nu w_1 \Vert_{L^2(\Gamma)} \right).
\end{equation}
An application of Theorem~\ref{thm_elliptic_regularity} with $U=\Omega$ and $\theta=2$ gives
	\begin{align*}
		\Vert \widetilde g_2 \Vert_{L^2(\Omega_2)} & = \rho \Vert \Delta z_2 \Vert_{L^2(\Omega_2)} \leq C \| z_2\|_{H^2(\Omega_2)} \le C \|\tilde W\|_{H^4(\Omega)\times H^2(\Omega)} \\
& \le C \Big\| \begin{pmatrix} \chi_1 f_1 + \chi_2 f_2 \\ \chi_1 g_1 + \chi_2 g_2 \end{pmatrix} \Big\|_{H^2(\Omega)\times L^2(\Omega)} \le C \|F\|_\HH.
	\end{align*}
Since $A$ is the generator of a bounded, analytic $C_0$-semigroup on $H^2_0(\Omega) \times L^2(\Omega)$ by Theorem \ref{thm_elliptic_regularity}, we see that
\[ \vert \lambda \vert \Vert \partial_\nu w_1 \Vert_{L^2(\Gamma)} \le C \vert \lambda \vert \Vert w_1 \Vert_{H^2(\Omega_1)} \leq C\Vert F \Vert_\HH.\]
Therefore, \eqref{4-13} yields
	\begin{align*}
		\Vert U - W \Vert_\HH \leq C \Vert F \Vert_\HH.
	\end{align*}
	Invoking Theorem \ref{thm_elliptic_regularity} again, we deduce
	\begin{align*}
		\Vert U \Vert_\HH \leq \Vert U - W \Vert_\HH + \Vert W \Vert_\HH \leq C\Vert F \Vert_\HH
	\end{align*}
	with a constant $C = C(\rho) > 0.$ This proves the theorem.
\end{proof}

\bibliographystyle{abbrv}

\end{document}